\DeclareMathAlphabet{\mathcalligra}{T1}{calligra}{m}{n}
\newtheorem{thm}{Theorem}[section]
\newtheorem{cor}[thm]{Corollary}
\newtheorem{lem}[thm]{Lemma}
\newtheorem{prop}[thm]{Proposition}
\theoremstyle{definition}
\newtheorem{defn}[thm]{Definition}
\newtheorem{rem}[thm]{Remark}
\newtheorem*{defn*}{Definition}
\newtheorem*{rems*}{Remarks}
\newtheorem*{rem*}{Remark}
\numberwithin{equation}{section}
\newcommand{\Eq}{{\text{E}}}
\newcommand{\M}{{M}}
\begin{document}

\title[The improved isoperimetric inequality and the Wigner caustic] {The improved isoperimetric inequality\linebreak and the Wigner caustic of planar ovals}
\author{ Micha\l{} Zwierzy\'nski}
\address{Warsaw University of Technology\\
Faculty of Mathematics and Information Science\\
Plac Politechniki 1\\
00-661 Warsaw\\
Poland\\}

\email{zwierzynskim@mini.pw.edu.pl}
\thanks{The work of M. Zwierzy\'nski was partially supported by NCN grant no. DEC-2013/11/B/ST1/03080. }

\subjclass[2010]{52A38, 52A40, 58K70}

\keywords{affine equidistants, convex curve, Wigner caustic, constant width, isoperimetric inequality, singularities}

\begin{abstract}
The classical isoperimetric inequality in the Euclidean plane $\mathbb{R}^2$ states that for a simple closed curve $\M$ of the length $L_{\M}$, enclosing a region of the area $A_{\M}$, one gets
\begin{align*}
L_{\M}^2\geqslant 4\pi A_{\M}.
\end{align*}
In this paper we present the \textit{improved isoperimetric inequality}, which states that if $\M$ is a closed regular simple convex curve, then
\begin{align*}
L_{\M}^2\geqslant 4\pi A_{\M}+8\pi\left|\widetilde{A}_{E_{\frac{1}{2}}(\M)}\right|,
\end{align*}
where $\widetilde{A}_{E_{\frac{1}{2}}(\M)}$ is an oriented area of the Wigner caustic of $\M$, and the equality holds if and only if $\M$ is a curve of constant width.

Furthermore we also present a stability property of the improved isoperimetric inequality (near equality implies curve nearly of constant width).

The Wigner caustic is an example of an affine $\lambda$-equidistant (for $\displaystyle\lambda=\frac{1}{2}$) and the improved isoperimetric inequality is a consequence of certain bounds of oriented areas of affine equidistants.
\end{abstract}

\maketitle

\section{Introduction}
The \textit{classical isoperimetric inequality} in the Euclidean plane $\mathbb{R}^2$ states that:

\begin{thm}(Isoperimetric inequality)
Let $\M$ be a simple closed curve of the length $L_{\M}$, enclosing a region of the area $A_{\M}$, then
\begin{align}\label{IsoperimetricIneq}
L_{\M}^2\geqslant 4\pi A_{\M},
\end{align}

and the equality (\ref{IsoperimetricIneq}) holds if and only if $\M$ is a circle.

\end{thm}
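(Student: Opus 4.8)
\medskip

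\noindent\emph{Proposed approach.} I would prove this by Hurwitz's Fourier method, since it makes the equality case fall out of the computation and it uses exactly the kind of trigonometric bookkeeping the rest of the paper applies to support functions. Because $\M$ is a rectifiable Jordan curve, parametrize it by arc length and rescale to $[0,2\pi]$, obtaining an absolutely continuous map $\gamma(t)=(x(t),y(t))$ with $\dot x(t)^{2}+\dot y(t)^{2}\equiv\bigl(L_{\M}/2\pi\bigr)^{2}$ for a.e.\ $t$; after a translation assume $\int_{0}^{2\pi}x(t)\,dt=0$, and orient $\M$ positively. Expand $x(t)=\sum_{n\in\mathbb{Z}}a_{n}e^{\mathrm{i}nt}$ and $y(t)=\sum_{n\in\mathbb{Z}}b_{n}e^{\mathrm{i}nt}$, with the reality relations $a_{-n}=\overline{a_{n}}$ and $b_{-n}=\overline{b_{n}}$; since $x,y$ are Lipschitz, their Fourier series may be differentiated termwise and Parseval is available.

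\smallskip

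The next step is to write both sides as Hermitian forms in the Fourier data. Parseval applied to the (constant) function $\dot x^{2}+\dot y^{2}$ gives
\begin{align*}
\frac{L_{\M}^{2}}{2\pi}=\int_{0}^{2\pi}\bigl(\dot x(t)^{2}+\dot y(t)^{2}\bigr)\,dt=2\pi\sum_{n\in\mathbb{Z}}n^{2}\bigl(|a_{n}|^{2}+|b_{n}|^{2}\bigr),
\end{align*}
while Green's theorem (valid for rectifiable Jordan curves) together with the complex Parseval identity, and a short accounting of the reality relations, gives
\begin{align*}
A_{\M}=\frac{1}{2}\int_{0}^{2\pi}\bigl(x(t)\dot y(t)-y(t)\dot x(t)\bigr)\,dt=-2\pi\sum_{n\in\mathbb{Z}}n\operatorname{Im}\bigl(\overline{a_{n}}\,b_{n}\bigr).
\end{align*}
Subtracting, and using $\bigl|2\operatorname{Im}(\overline{a_{n}}b_{n})\bigr|\leqslant|a_{n}|^{2}+|b_{n}|^{2}$ together with $n^{2}\geqslant|n|$ for every $n\in\mathbb{Z}$,
\begin{align*}
\frac{L_{\M}^{2}-4\pi A_{\M}}{2\pi}=2\pi\sum_{n\in\mathbb{Z}}\Bigl(n^{2}\bigl(|a_{n}|^{2}+|b_{n}|^{2}\bigr)+2n\operatorname{Im}(\overline{a_{n}}b_{n})\Bigr)\geqslant 2\pi\sum_{n\in\mathbb{Z}}\bigl(n^{2}-|n|\bigr)\bigl(|a_{n}|^{2}+|b_{n}|^{2}\bigr)\geqslant 0,
\end{align*}
which is exactly $L_{\M}^{2}\geqslant 4\pi A_{\M}$ --- Wirtinger's inequality in disguise. (Alternatively one could run Schmidt's comparison proof: bound $A_{\M}+\pi r^{2}$ from above by $rL_{\M}$ via Green's theorem and Cauchy--Schwarz, where $2r$ is the width of a bounding strip, and then apply the AM--GM inequality.)

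\smallskip

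For the equality discussion, every summand in the last display is nonnegative, so equality forces each of them to vanish. For $|n|\geqslant 2$ one has $n^{2}-|n|\geqslant 2$, hence $a_{n}=b_{n}=0$; for $n=\pm 1$ the summand equals $|a_{1}-\mathrm{i}b_{1}|^{2}$, forcing $b_{1}=-\mathrm{i}a_{1}$. Thus $x$ and $y$ are trigonometric polynomials of degree one and $\gamma$ parametrizes a circle (nondegenerate because $a_{1}\neq 0$ whenever $L_{\M}>0$); conversely, a circle visibly gives equality throughout. The analytic core here is the triviality $n^{2}\geqslant|n|$ on $\mathbb{Z}$, so the only genuine care is needed in the soft points: justifying Green's theorem and Parseval for a merely Lipschitz arc-length parametrization, and keeping the reality relations $a_{-n}=\overline{a_{n}}$, $b_{-n}=\overline{b_{n}}$ straight in the area formula. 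I expect these, rather than any real analytic difficulty, to be the main obstacle.
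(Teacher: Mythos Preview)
Your proof is correct: this is Hurwitz's classical Fourier argument, and the bookkeeping (including the identification of the $n=\pm 1$ summand with $|a_{1}-\mathrm{i}b_{1}|^{2}$ and hence the equality case) checks out. However, the paper does not actually prove this theorem; it is stated in the introduction as a classical background result, with references to Steiner, Hurwitz, and others, so there is no ``paper's own proof'' to compare against directly.

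That said, your choice of method is well matched to the paper's later techniques, though the paper would naturally argue via the support function rather than the coordinate functions. From the paper's formulas (2.9)--(2.10), namely $L_{\M}=2\pi a_{0}$ and $A_{\M}=\pi a_{0}^{2}-\tfrac{\pi}{2}\sum_{n\geqslant 2}(n^{2}-1)(a_{n}^{2}+b_{n}^{2})$, the classical inequality for ovals is immediate:
\[
L_{\M}^{2}-4\pi A_{\M}=2\pi^{2}\sum_{n\geqslant 2}(n^{2}-1)(a_{n}^{2}+b_{n}^{2})\geqslant 0,
\]
with equality iff $a_{n}=b_{n}=0$ for all $n\geqslant 2$, i.e., $\M$ is a circle. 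Your arc-length/coordinate-function version has the advantage of covering arbitrary rectifiable Jordan curves, not just ovals admitting a smooth support function; the support-function route is restricted to convex curves but is exactly what extends to the Wigner-caustic refinement that is the paper's main goal.
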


This fact was already known in ancient Greece. The first mathematical proof was given in the nineteenth century by Steiner \cite{S1}. After that, there have been many new proofs, generalizations, and applications of this famous theorem, see for instance \cite{C1, G1, G4, H3,  L1, PX1, R1, S1}, and the literature therein. In 1902 Hurtwiz \cite{H3} and later Gao \cite{G1} showed the \textit{reverse isoperimetric inequality}.

\begin{thm}(Reverse isoperimetric inequality)
Let $K$ be a stricly convex domain whose support function $p$ has the property that $p''$ exists and is absolutely continuous, and let $\widetilde{A}$ denote the oriented area of the evolute of the boundary curve of $K$. Let $L_{K}$ be the perimeter of $K$ and $A_{K}$ be the area of $K$. Then
\begin{align}\label{ReverseIsoperimetricIneq}
L_{\M}^2\leqslant 4\pi A_{\M}+\pi|\widetilde{A}|,
\end{align}
Equality holds if and only if $p(\theta)=a_0+a_1\cos\theta+b_1\sin\theta+a_2\cos 2\theta+b_2\sin 2\theta$.
\end{thm}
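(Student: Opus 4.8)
\emph{Proof plan.} The strategy is the classical Fourier argument of Hurwitz, with the support function $p$ of $K$ as the single unknown. Parametrize $\partial K$ by the angle $\theta$ of the outward unit normal, so that
\[
\gamma(\theta)=p(\theta)(\cos\theta,\sin\theta)+p'(\theta)(-\sin\theta,\cos\theta),\qquad\theta\in[0,2\pi].
\]
First I would record the standard integral formulas for perimeter and area: $L_{K}=\int_0^{2\pi}p\,d\theta$, and since the radius of curvature is $\rho=p+p''$ and $p$ is $2\pi$-periodic,
\[
A_{K}=\frac12\int_0^{2\pi}p(p+p'')\,d\theta=\frac12\int_0^{2\pi}\bigl(p^2-(p')^2\bigr)\,d\theta,
\]
the last equality by one integration by parts, legitimate because $p,p'$ are absolutely continuous and periodic.

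Next I would identify the evolute and compute its oriented area. Differentiating gives $\gamma'(\theta)=(p+p'')(-\sin\theta,\cos\theta)$, so the centre of curvature is
\[
e(\theta)=\gamma(\theta)-(p+p'')(\cos\theta,\sin\theta)=-p''(\cos\theta,\sin\theta)+p'(-\sin\theta,\cos\theta),
\]
hence $e'(\theta)=-(p'+p''')(\cos\theta,\sin\theta)$, and a short expansion of $x_e y_e'-y_e x_e'$ collapses to $(p')^2+p'p'''$. Using the hypothesis on $p''$ (which gives $p'''\in L^1$ with $p''=\int p'''$) to integrate by parts once more,
\[
\widetilde A=\frac12\int_0^{2\pi}\bigl(x_e y_e'-y_e x_e'\bigr)\,d\theta=\frac12\int_0^{2\pi}\bigl((p')^2+p'p'''\bigr)\,d\theta=\frac12\int_0^{2\pi}\bigl((p')^2-(p'')^2\bigr)\,d\theta.
\]

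Then I would substitute the Fourier series $p(\theta)=a_0+\sum_{n\ge1}(a_n\cos n\theta+b_n\sin n\theta)$ and apply Parseval to $p,p',p''$ (each continuous, hence in $L^2$, under the stated regularity). Writing $c_n:=a_n^2+b_n^2$ this gives $L_{K}^2=4\pi^2a_0^2$, $\ 4\pi A_{K}=4\pi^2a_0^2-2\pi^2\sum_{n\ge2}(n^2-1)c_n$, and $\widetilde A=-\tfrac{\pi}{2}\sum_{n\ge2}n^2(n^2-1)c_n\le0$, so that $|\widetilde A|=-\widetilde A$ and therefore
\[
4\pi A_{K}+\pi|\widetilde A|-L_{K}^2=\frac{\pi^2}{2}\sum_{n\ge2}(n^2-1)(n^2-4)\,c_n.
\]
Since $(n^2-1)(n^2-4)\ge0$ for every integer $n\ge2$, the right-hand side is nonnegative, which is the asserted inequality; and since the factor is strictly positive for $n\ge3$ and vanishes at $n=2$, equality holds exactly when $a_n=b_n=0$ for all $n\ge3$, i.e.\ $p(\theta)=a_0+a_1\cos\theta+b_1\sin\theta+a_2\cos2\theta+b_2\sin2\theta$ (and every such $p$ makes the sum vanish).

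The difficulties here are analytic bookkeeping rather than ideas. The main one is making sure the hypothesis ``$p''$ exists and is absolutely continuous'' is exactly what licenses both integrations by parts and the term-by-term evaluation of $\int(p')^2$ and $\int(p'')^2$ from the Fourier coefficients; a secondary one is pinning down the orientation convention in the definition of $\widetilde A$ so that the evolute area identity $\widetilde A=\tfrac12\int((p')^2-(p'')^2)\,d\theta$ holds with the correct sign, giving $\widetilde A\le0$ and coefficient $\pi$ in front of $|\widetilde A|$. Once that identity is established — it is the $\lambda=0$ prototype of the affine-equidistant computations behind the improved isoperimetric inequality — the remainder is just the nonnegativity of one explicit Fourier series.
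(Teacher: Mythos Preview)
Your argument is correct. However, note that the paper does not actually supply its own proof of this theorem: it is quoted in the introduction as a known result, with attribution to Hurwitz and to Gao, and no proof is given in the body of the paper. So there is no in-paper proof to compare against.

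That said, your approach is exactly the Hurwitz--Gao Fourier method, and it is the same machinery the paper deploys for its own results (Theorem~3.2 and the improved isoperimetric inequality): expand the support function $p$ in a Fourier series, express $L$, $A$, and the oriented area of the relevant derived curve via Parseval, and read off the inequality from the sign of an explicit coefficient series. Your evolute computation $\widetilde A=\tfrac12\int_0^{2\pi}\bigl((p')^2-(p'')^2\bigr)\,d\theta$ and the resulting identity
\[
4\pi A_K+\pi|\widetilde A|-L_K^2=\frac{\pi^2}{2}\sum_{n\ge2}(n^2-1)(n^2-4)(a_n^2+b_n^2)
\]
are both correct, as is the equality characterization. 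The only comment is that the paper's own arguments (for the Wigner caustic rather than the evolute) never need $p'''$, so your remark about the regularity hypothesis being exactly what licenses the second integration by parts is on point and specific to this theorem, not to the paper's main line.
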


In this paper we present bounds of oriented areas of affine equidistants and thanks to it we will prove the \textit{improved isoperimetric inequality}, which states that if $\M$ is a closed regular simple convex curve, then
\begin{align*}
L_{\M}^2\geqslant 4\pi A_{\M}+8\pi\left|\widetilde{A}_{E_{\frac{1}{2}}(\M)}\right|,
\end{align*}
where $\widetilde{A}_{E_{\frac{1}{2}}(\M)}$ is an oriented area of the Wigner caustic of $\M$, and the equality holds if and only if $\M$ is a curve of constant width. This is very interesting that the absolute value of the oriented area of the Wigner caustic improves the classical isoperimetric inequality and also gives the exact link between the area and the length of constant width curves.

The family of affine $\lambda$ - equidistants arises as the counterpart of parallels or offsets in Euclidean geometry. An affine equidistant for us is the set of points of chords connecting points on $\M$ where tangent lines to $\M$ are parallel, which divide the chord segments between the base points with a fixed ratio $\lambda$, also called the \textit{affine time}. When in affine $\lambda$-equidistants the ratio $\lambda$ is equal to $\displaystyle\frac{1}{2}$ then this set is also known as the \textit{Wigner caustic}. The Wigner caustic of a smooth convex closed curve on affine symplectic plane was first introduced by Berry, in his celebrated 1977 paper \cite{B1} on the semiclassical limit of Wigner's phase-space representation of quantum states. They are many papers considering affine equidistants, see for instance \cite{C2, CDR1, DMR1, DR1, DRS1,  DZ1, G3, GWZ1, RZ1, Z1}, and the literature therein. The Wigner caustic is also known as the \textit{area evolute}, see \cite{C2, G3}.


\section{Geometric quantities, affine equidistants and Fourier series}

Let $\M$ be a smooth planar curve, i.e. the image of the $C^{\infty}$ smooth map from an interval to $\mathbb{R}^2$. A smooth curve is \textit{closed} if it is the image of a $C^{\infty}$ smooth map from $S^1$ to $\mathbb{R}^2$. A smooth curve is \textit{regular} if its velocity does not vanish. A regular closed curve is \textit{convex} if its signed curvature has a constant sign. An \textit{oval} is a smooth closed convex curve which is simple, i.e. it has no selfintersections. In our case it is enough to consider $C^2$ - smooth curves.

\begin{defn}\label{parallelpair}
A pair $a,b\in\M$ ($a\neq b$) is called the \textit{parallel pair} if tangent lines to $\M$ at points $a,b$ are parallel.
\end{defn}

\begin{defn}\label{chord}
A \textit{chord} passing through a pair $a,b\in\M$ is the line:
$$l(a,b)=\left\{\lambda a+(1-\lambda)b\ \big| \lambda\in\mathbb{R}\right\}.$$
\end{defn}

\begin{defn}\label{equidistantSet}
An affine $\lambda$-equidistant is the following set.
$$\Eq_{\lambda}(\M)=\left\{\lambda a+(1-\lambda)b\ \big|\ a,b \text{ is a parallel pair of } \M\right\}.$$

The set $\Eq_{\frac{1}{2}}(\M)$ will be called the \textit{Wigner caustic} of $\M$.
\end{defn}

Note that, for any given $\lambda\in\mathbb{R}$ we have an equality $\Eq_{\lambda}(\M)=\Eq_{1-\lambda}(\M)$. Thus, the case $\displaystyle\lambda=\frac{1}{2}$ is special. In particular we have also equalities $\Eq_0(\M)=\Eq_1(\M)=\M$.

It is well known that if $\M$ is a generic oval, then $\Eq_{\lambda}(\M)$ are smooth closed curves with cusp singularities only \cite{B1, GZ1}, the number of cusps of $\Eq_{\frac{1}{2}}(\M)$ is odd and not smaller than $3$ \cite{B1, G3} and the number of cusps of $\Eq_{\lambda}(\M)$ for a generic value of $\displaystyle\lambda\neq\frac{1}{2}$ is even \cite{DZ1}.

\begin{defn}
An oval is said to have \textit{constant width} if the distance between every pair of parallel tangent lines is constant. This constant is called the \textit{width} of the curve.
\end{defn}

\begin{figure}[h]
\centering
\includegraphics[scale=0.35]{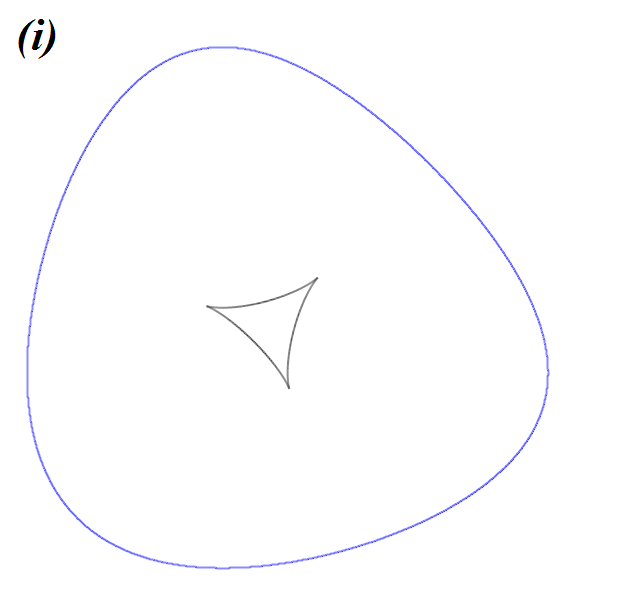}
\includegraphics[scale=0.35]{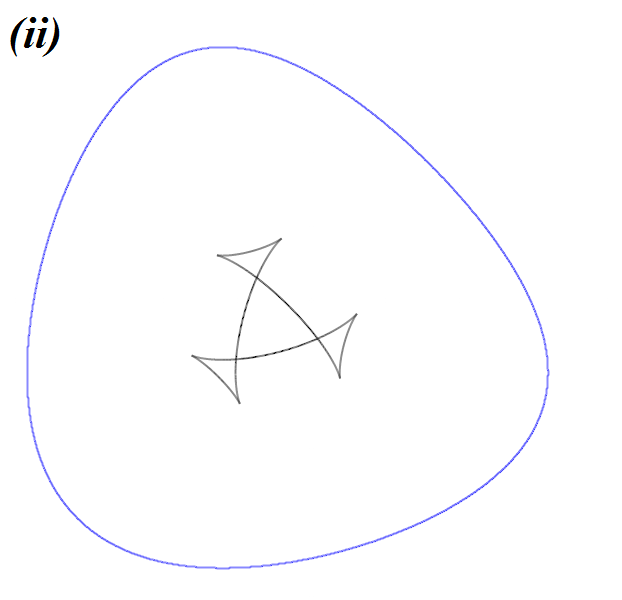}
\caption{An oval $\M$ and (i) $\Eq_{\frac{1}{2}}(\M)$, (ii) $\Eq_{\frac{2}{5}}(\M)$.}
\label{Picture1}
\end{figure}

Let us recall some basic facts about plane ovals which will be used later. The details can be found in the classical literature \cite{G4, H2}.

Let $\M$ be a positively oriented oval. Take a point $O$ inside $\M$ as the origin of our frame. Let $p$ be the oriented perpendicular distance from $O$ to the tangent line at a point on $\M$, and $\theta$ the oriented angle from the positive $x_1$-axis to this perpendicular ray. Clearly, $p$ is a single-valued periodic function of $\theta$ with period $2\pi$ and the parameterization of $\M$ in terms of $\theta$ and $p(\theta)$ is as follows
\begin{align}\label{ParameterizationM}
\gamma(\theta)=\big(\gamma_1(\theta),\gamma_2(\theta)\big)=\big(p(\theta)\cos\theta-p'(\theta)\sin\theta, p(\theta)\sin\theta+p'(\theta)\cos\theta\big).
\end{align}
The couple $\big(\theta, p(\theta)\big)$ is usually called the \textit{polar tangential coordinate} on $\M$, and $p(\theta)$ its \textit{Minkowski's support function}.

Then, the curvature $\kappa$ of $\M$ is in the following form
\begin{align}\label{CurvatureM}
\displaystyle \kappa(\theta)=\frac{d\theta}{ds}=\frac{1}{p(\theta)+p''(\theta)}>0,
\end{align}
or equivalently, the radius of a curvature $\rho$ of $\M$ is given by
\begin{align*}
\rho(\theta)=\frac{ds}{d\theta}=p(\theta)+p''(\theta).
\end{align*}
Let $L_{\M}$ and $A_{\M}$ be the length of $\M$ and the area it bounds, respectively. Then one can get that
\begin{align}\label{CauchyFormula}
L_{\M}=\int_{\M}ds=\int_0^{2\pi}\rho(\theta)d\theta=\int_0^{2\pi}p(\theta)d\theta,
\end{align}
and
\begin{align}\label{BlaschkeFormula}
A_{\M} & =\frac{1}{2}\int_{\M}p(\theta)ds\\
\nonumber	&=\frac{1}{2}\int_0^{2\pi}p(\theta)\left[p(\theta)+p''(\theta)\right]d\theta=\frac{1}{2}\int_0^{2\pi}\left[p^2(\theta)-p'^2(\theta)\right]d\theta.
\end{align}
(\ref{CauchyFormula}) and (\ref{BlaschkeFormula}) are known as \textit{Cauchy's formula} and \textit{Blaschke's formula}, respectively.

Since the Minkowski support function of $\M$ is smooth bounded and $2\pi$--periodic, its Fourier series is in the form
\begin{align}\label{Fourierofp}
p(\theta)=a_0+\sum_{n=1}^{\infty}\big(a_n\cos n\theta+b_n\sin n\theta\big).
\end{align}
Differentiation of (\ref{Fourierofp}) with respect to $\theta$ gives
\begin{align}\label{Fourierofpprime}
p'(\theta)=\sum_{n=1}^{\infty}n\big(-a_n\sin n\theta+b_n\cos n\theta\big).
\end{align}
By (\ref{Fourierofp}), (\ref{Fourierofpprime}) and the Parseval equality one can express $L_{\M}$ and $A_{\M}$ in terms of Fourier coefficients of $p(\theta)$ in the following way.
\begin{align}
\label{Lengthofmfourier} L_{\M} &=2\pi a_0.\\
\label{Areaofmfourier} A_{\M} &=\pi a_0^2-\frac{\pi}{2}\sum_{n=2}^{\infty}(n^2-1)(a_n^2+b_n^2).
\end{align}

One can notice that $\gamma(\theta),\gamma(\theta+\pi)$ is a parallel pair of $\M$, hence $\gamma_{\lambda}$ - the parameterization of $\Eq_{\lambda}(\M)$ is as follows
\begin{align}\label{ParameterizationEqM}
\gamma_{\lambda}(\theta) 
	&=\big(\gamma_{\lambda, 1}(\theta), \gamma_{\lambda, 2}(\theta)\big)\\
\nonumber	&=\lambda\gamma(\theta)+(1-\lambda)\gamma(\theta+\pi) \\
\nonumber	&=\left(P_{\lambda}(\theta)\cos\theta-P'_{\lambda}\sin\theta, P_{\lambda}(\theta)\sin\theta+P'_{\lambda}(\theta)\cos\theta\right),
\end{align}
where $P_{\lambda}(\theta)=\lambda p(\theta)-(1-\lambda)p(\theta+\pi)$, $\theta\in[0,2\pi]$. Furthermore if $\displaystyle\lambda=\frac{1}{2}$, then the map $\M\ni\gamma(\theta)\mapsto\gamma_{\frac{1}{2}}(\theta)\in\Eq_{\frac{1}{2}}(\M)$ for $\theta\in[0,2\pi]$ is the double covering of the Wigner caustic of $\M$.


\section{Oriented areas of equidistants and the improved isoperimetric inequality}

Let $L_{\Eq_{\lambda}(\M)}, \widetilde{A}_{E_{\lambda}(\M)}$ denote the length of $\Eq_{\lambda}(\M)$ and the oriented area of $\Eq_{\lambda}(\M)$, respectively.

Similarly like in \cite{DZ1} we can show the following proposition.
\begin{prop}\cite{DZ1}
Let $M$ be an oval. Then
\begin{enumerate}[(i)]
\item if $\displaystyle\lambda\neq\frac{1}{2}$, then $L_{\Eq_{\lambda}(\M)}\leqslant\big(|\lambda|+|1-\lambda|\big)L_{\M}$.

In particular if $\displaystyle\lambda\in\left(0,\frac{1}{2}\right)\cup\left(\frac{1}{2},1\right)$, then $L_{\Eq_{\lambda}(\M)}\leqslant L_{\M}$.
\item $2L_{\Eq_{\frac{1}{2}}(\M)}\leqslant L_{\M}$.
\end{enumerate}
\end{prop}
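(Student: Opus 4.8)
The plan is to read off the length of $\Eq_{\lambda}(\M)$ directly from the explicit parameterization (\ref{ParameterizationEqM}) and then estimate it by the triangle inequality, using only that the radius of curvature $\rho(\theta)=p(\theta)+p''(\theta)$ is strictly positive (by (\ref{CurvatureM})) and $2\pi$--periodic.

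First I would record the elementary fact that a curve of the form $\theta\mapsto\bigl(f(\theta)\cos\theta-f'(\theta)\sin\theta,\ f(\theta)\sin\theta+f'(\theta)\cos\theta\bigr)$ has velocity $\bigl(-(f+f'')\sin\theta,\ (f+f'')\cos\theta\bigr)$, hence speed $|f(\theta)+f''(\theta)|$. Applying this to $\gamma_{\lambda}$ with $f=P_{\lambda}$ and using $P_{\lambda}(\theta)+P_{\lambda}''(\theta)=\lambda\rho(\theta)-(1-\lambda)\rho(\theta+\pi)$, one gets $|\gamma_{\lambda}'(\theta)|=\bigl|\lambda\rho(\theta)-(1-\lambda)\rho(\theta+\pi)\bigr|$.

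For part (i): since for $\lambda\neq\frac12$ the map in (\ref{ParameterizationEqM}) parameterizes $\Eq_{\lambda}(\M)$ as $\theta$ runs over $[0,2\pi]$, we have $L_{\Eq_{\lambda}(\M)}=\int_0^{2\pi}|\gamma_{\lambda}'(\theta)|\,d\theta$, and by the triangle inequality together with $\rho>0$ and Cauchy's formula (\ref{CauchyFormula}),
\[
\int_0^{2\pi}\bigl|\lambda\rho(\theta)-(1-\lambda)\rho(\theta+\pi)\bigr|\,d\theta
\leqslant |\lambda|\int_0^{2\pi}\rho(\theta)\,d\theta+|1-\lambda|\int_0^{2\pi}\rho(\theta+\pi)\,d\theta
=\bigl(|\lambda|+|1-\lambda|\bigr)L_{\M},
\]
using the $2\pi$--periodicity of $\rho$ in the last step. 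If $\lambda\in(0,1)$ then $|\lambda|+|1-\lambda|=1$, which is the second assertion of (i). For part (ii): with $\lambda=\frac12$ one has $P_{\frac12}(\theta)+P_{\frac12}''(\theta)=\frac12\bigl(\rho(\theta)-\rho(\theta+\pi)\bigr)$, and since $\gamma_{\frac12}$ is a double covering of $\Eq_{\frac12}(\M)$ on $[0,2\pi]$ (recalled right after (\ref{ParameterizationEqM})),
\[
2L_{\Eq_{\frac12}(\M)}=\int_0^{2\pi}|\gamma_{\frac12}'(\theta)|\,d\theta
=\frac12\int_0^{2\pi}\bigl|\rho(\theta)-\rho(\theta+\pi)\bigr|\,d\theta
\leqslant\frac12\int_0^{2\pi}\bigl(\rho(\theta)+\rho(\theta+\pi)\bigr)\,d\theta=L_{\M}.
\]

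All steps are routine computations; the only point that requires care is the bookkeeping of the covering multiplicity of $\gamma_{\lambda}$ on $[0,2\pi]$ — one-to-one for $\lambda\neq\frac12$ and two-to-one for $\lambda=\frac12$ — which is exactly what produces the extra factor $2$ in (ii). Since the double-covering statement has already been established in Section 2, there is no serious obstacle, and (as in \cite{DZ1}) the whole argument reduces to applying the triangle inequality to $\lambda\rho(\theta)-(1-\lambda)\rho(\theta+\pi)$ under the sign condition $\rho>0$.
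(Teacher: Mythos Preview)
Your argument is correct and is essentially the paper's own proof: both compute $L_{\Eq_\lambda(\M)}=\int_0^{2\pi}|\gamma_\lambda'(\theta)|\,d\theta$, bound the integrand by the triangle inequality, and invoke the double covering for $\lambda=\frac12$. The only cosmetic difference is that you first evaluate the speed as the scalar $|\lambda\rho(\theta)-(1-\lambda)\rho(\theta+\pi)|$ before estimating, whereas the paper applies the triangle inequality directly to the vector $\lambda\gamma'(\theta)+(1-\lambda)\gamma'(\theta+\pi)$; since $\gamma'(\theta)$ and $\gamma'(\theta+\pi)$ are anti-parallel these are literally the same bound.
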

\begin{proof}
The parameterization of $\M$ and $\Eq_{\lambda}(\M)$ is like in (\ref{ParameterizationM}) and (\ref{ParameterizationEqM}), respectively. Then
\begin{align*}
L_{\Eq_{\lambda}(\M)}
	&=\int_0^{2\pi}|\gamma'_{\lambda}(\theta)|d\theta\\
	&=\int_0^{2\pi}\big|\lambda\gamma'(\theta)+(1-\lambda)\gamma'(\theta+\pi)\big|d\theta\\
	&\leqslant |\lambda|\int_0^{2\pi}|\gamma'(\theta)|d\theta+|1-\lambda|\int_0^{2\pi}|\gamma'(\theta+\pi)|d\theta\\
	&=\big(|\lambda|+|1-\lambda|\big)L_{\M}.
\end{align*}
If $\displaystyle\lambda=\frac{1}{2}$, then the map $\M\ni\gamma(\theta)\mapsto\gamma_{\frac{1}{2}}(\theta)\in\Eq_{\frac{1}{2}}(\M)$ for $\theta\in[0,2\pi]$ is the double covering of the Wigner caustic of $\M$. Thus $2L_{\Eq_{\frac{1}{2}}(\M)}\leqslant L_{\M}$.
\end{proof}

\begin{thm}\label{ThmBoundedAreas}
Let $\M$ be a positively oriented oval of the length $L_{\M}$, enclosing the region of the area $A_{\M}$. Let $\widetilde{A}_{E_{\lambda}(\M)}$ denote an oriented area of $E_{\lambda}(\M)$. Then
\begin{enumerate}[(i)]
\item if $\displaystyle\lambda\in\left(0,\frac{1}{2}\right)\cup\left(\frac{1}{2},1\right)$, then
\begin{align}\label{ThmA}
A_{\M}-\frac{\lambda(1-\lambda)}{\pi}L_{\M}^2\leqslant\widetilde{A}_{E_{\lambda}(\M)}\leqslant (2\lambda-1)^2A_{\M}.
\end{align}
\item if $\displaystyle\lambda=\frac{1}{2}$, then
\begin{align}\label{ThmB}
A_{\M}-\frac{L_{\M}^2}{4\pi}\leqslant 2\widetilde{A}_{E_{\frac{1}{2}}(\M)}\leqslant 0.
\end{align}
\item if $\lambda\in(-\infty,0)\cup(1,\infty)$, then
\begin{align}\label{ThmC}
(2\lambda-1)^2A_{\M}\leqslant\widetilde{A}_{E_{\lambda}(\M)}\leqslant A_M-\frac{\lambda(1-\lambda)}{\pi}L_{\M}^2.
\end{align}
\item for all $\displaystyle\lambda\neq 0, \frac{1}{2}, 1$ $\M$ is a curve of constant width if and only if
\begin{align}\label{ThmD}
\widetilde{A}_{E_{\lambda}(\M)}=A_{\M}-\frac{\lambda(1-\lambda)}{\pi}L_{\M}^2.
\end{align}
\item $\M$ is a curve of constant width if and only if
\begin{align}\label{ThmE}
2\widetilde{A}_{E_{\frac{1}{2}}(\M)}=A_{\M}-\frac{L_{\M}^2}{4\pi}.
\end{align}
\end{enumerate}
\end{thm}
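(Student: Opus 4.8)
The plan is to push every quantity down to the Fourier coefficients of the Minkowski support function $p$ of $\M$, in the same spirit as formulas $(\ref{Lengthofmfourier})$ and $(\ref{Areaofmfourier})$. The first step is a Blaschke-type formula for the oriented area of an equidistant: since the parameterization $\gamma_{\lambda}$ in $(\ref{ParameterizationEqM})$ has exactly the same shape as $(\ref{ParameterizationM})$ with $p$ replaced by $P_{\lambda}$, a direct computation of $\gamma_{\lambda,1}'=-(P_{\lambda}+P_{\lambda}'')\sin\theta$, $\gamma_{\lambda,2}'=(P_{\lambda}+P_{\lambda}'')\cos\theta$ followed by one integration by parts gives
\begin{align*}
\frac{1}{2}\int_0^{2\pi}\big(\gamma_{\lambda,1}\gamma_{\lambda,2}'-\gamma_{\lambda,2}\gamma_{\lambda,1}'\big)\,d\theta=\frac{1}{2}\int_0^{2\pi}\big(P_{\lambda}^2(\theta)-P_{\lambda}'^2(\theta)\big)\,d\theta.
\end{align*}
For $\lambda\neq\frac{1}{2}$ the left-hand side is $\widetilde{A}_{E_{\lambda}(\M)}$; for $\lambda=\frac{1}{2}$, since $\theta\mapsto\gamma_{\frac{1}{2}}(\theta)$, $\theta\in[0,2\pi]$, double-covers the Wigner caustic, it equals $2\widetilde{A}_{E_{\frac{1}{2}}(\M)}$.

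Next I would expand $P_{\lambda}$. From $p(\theta+\pi)=a_0+\sum_{n\geq 1}(-1)^n(a_n\cos n\theta+b_n\sin n\theta)$ and $P_{\lambda}=\lambda p(\theta)-(1-\lambda)p(\theta+\pi)$, the coefficient of the $n$-th harmonic of $P_{\lambda}$ is $\lambda-(1-\lambda)(-1)^n$, which equals $1$ for odd $n$ and $2\lambda-1$ for even $n$ (including the constant term $(2\lambda-1)a_0$). Feeding this into the formula of the previous step via Parseval, exactly as $(\ref{Areaofmfourier})$ was obtained, and abbreviating
\begin{align*}
S_{\mathrm{odd}}=\sum_{\substack{n\geq 3\\ n\ \mathrm{odd}}}(n^2-1)(a_n^2+b_n^2),\qquad S_{\mathrm{even}}=\sum_{\substack{n\geq 2\\ n\ \mathrm{even}}}(n^2-1)(a_n^2+b_n^2),
\end{align*}
one gets $\widetilde{A}_{E_{\lambda}(\M)}=\pi(2\lambda-1)^2a_0^2-\frac{\pi}{2}S_{\mathrm{odd}}-\frac{\pi}{2}(2\lambda-1)^2 S_{\mathrm{even}}$ for $\lambda\neq\frac{1}{2}$, and $2\widetilde{A}_{E_{\frac{1}{2}}(\M)}=-\frac{\pi}{2}S_{\mathrm{odd}}$, while $A_{\M}=\pi a_0^2-\frac{\pi}{2}(S_{\mathrm{odd}}+S_{\mathrm{even}})$ and $L_{\M}^2=4\pi^2 a_0^2$.

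From here the five assertions are pure bookkeeping with signs. Subtracting yields the two identities
\begin{align*}
\widetilde{A}_{E_{\lambda}(\M)}-(2\lambda-1)^2 A_{\M}=2\pi\lambda(\lambda-1)S_{\mathrm{odd}},\qquad \widetilde{A}_{E_{\lambda}(\M)}-\Big(A_{\M}-\frac{\lambda(1-\lambda)}{\pi}L_{\M}^2\Big)=2\pi\lambda(1-\lambda)S_{\mathrm{even}},
\end{align*}
with the analogues for $2\widetilde{A}_{E_{\frac{1}{2}}(\M)}$ (coefficients $-\frac{\pi}{2}$ and $\frac{\pi}{2}$). Since $S_{\mathrm{odd}},S_{\mathrm{even}}\geq 0$, parts (i)–(iii) follow from the sign of $\lambda(1-\lambda)$ (positive on $(0,1)$, negative on $(-\infty,0)\cup(1,\infty)$) together with the sign of $\lambda(\lambda-1)$; part (ii) and the bound $2\widetilde{A}_{E_{\frac{1}{2}}(\M)}\leq 0$ are the specialization $\lambda=\frac{1}{2}$. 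For (iv) and (v), equality in $\widetilde{A}_{E_{\lambda}(\M)}\geq A_{\M}-\frac{\lambda(1-\lambda)}{\pi}L_{\M}^2$ (resp. in $2\widetilde{A}_{E_{\frac{1}{2}}(\M)}\geq A_{\M}-\frac{L_{\M}^2}{4\pi}$) holds iff $S_{\mathrm{even}}=0$, i.e. iff $a_n=b_n=0$ for every even $n\geq 2$; since $p(\theta)+p(\theta+\pi)=2a_0+2\sum_{n\ \mathrm{even}}(a_n\cos n\theta+b_n\sin n\theta)$, this is precisely the statement that the width $p(\theta)+p(\theta+\pi)$ is constant, so $\M$ is a curve of constant width.

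The one genuinely delicate point is the first step: one must be careful about what the \emph{oriented} area of $E_{\lambda}(\M)$ means when $P_{\lambda}$ is not the support function of a convex curve and $E_{\lambda}(\M)$ carries cusps, and in particular account correctly for the double covering at $\lambda=\frac{1}{2}$. The signed-area integral $\frac{1}{2}\oint(x\,dy-y\,dx)$ is the right notion and is insensitive to cusps, so once that bookkeeping is pinned down the remainder is the routine Fourier computation sketched above.
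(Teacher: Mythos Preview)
Your proof is correct and follows essentially the same approach as the paper: both derive the Blaschke-type identity $\frac{1}{2}\int_0^{2\pi}(P_{\lambda}^2-P_{\lambda}'^2)\,d\theta$ for the (doubly counted, when $\lambda=\tfrac12$) oriented area, expand via the Fourier series of $p$, and read off the inequalities and the constant-width equality case from the signs of the resulting coefficients. The only organizational difference is that the paper packages the cross-term as $\Psi_{\M}=\frac{1}{2}\int_0^{2\pi}[p(\theta)p(\theta+\pi)-p'(\theta)p'(\theta+\pi)]\,d\theta$ and bounds $A_{\M}\leq\Psi_{\M}\leq L_{\M}^2/(2\pi)-A_{\M}$, whereas you split directly into $S_{\mathrm{odd}}$ and $S_{\mathrm{even}}$; the two bookkeepings are equivalent, and your explicit identities $\widetilde{A}_{E_{\lambda}(\M)}-(2\lambda-1)^2A_{\M}=2\pi\lambda(\lambda-1)S_{\mathrm{odd}}$ and $\widetilde{A}_{E_{\lambda}(\M)}-\big(A_{\M}-\tfrac{\lambda(1-\lambda)}{\pi}L_{\M}^2\big)=2\pi\lambda(1-\lambda)S_{\mathrm{even}}$ make the equality analysis in (iv)--(v) marginally cleaner.
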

\begin{proof}
Let (\ref{ParameterizationEqM}) be the parameterization of $\Eq_{\lambda}(\M)$. Then the oriented area of $\Eq_{\lambda}(\M)$ is equal to
\begin{align*}
\widetilde{A}_{E_{\lambda}(\M)}
	&=\frac{1}{2}\int_{\Eq_{\lambda}(\M)}\gamma_{\lambda, 1}d\gamma_{\lambda, 2}-\gamma_{\lambda, 2}d\gamma_{\lambda, 1}& \\
	&=\frac{1}{2}\int_{0}^{2\pi}\Big[\left(P_{\lambda}(\theta)\cos\theta-P'_{\lambda}(\theta)\sin\theta\right)\left(P_{\lambda}(\theta)+P''_{\lambda}(\theta)\right)\cos\theta \\
	&\qquad\qquad\quad +\left(P_{\lambda}(\theta)\sin\theta+P'_{\lambda}\cos\theta\right)\left(P_{\lambda}(\theta)+P''_{\lambda}(\theta)\right)\sin\theta\Big]d\theta\\
	&=\frac{1}{2}\int_0^{2\pi}\Big[P_{\lambda}^2(\theta)+P_{\lambda}(\theta)P''_{\lambda}(\theta)\Big]d\theta\\
	&=\frac{1}{2}\int_0^{2\pi}\Big[P_{\lambda}^2(\theta)-P'^2_{\lambda}(\theta)\Big]d\theta\\
	&=\frac{1}{2}\int_0^{2\pi}\Big[\big(\lambda p(\theta)-(1-\lambda)p(\theta+\pi)\big)^2-\big(\lambda p'(\theta)-(1-\lambda)p'(\theta+\pi)\big)^2\Big]d\theta\\
	&=\lambda^2\cdot\frac{1}{2}\int_{0}^{2\pi}\left[p^2(\theta)-p'^2(\theta)\right]d\theta+(1-\lambda)^2\cdot\frac{1}{2}\int_0^{2\pi}\left[p^2(\theta+\pi)-p'^2(\theta+\pi)\right]d\theta\\
	&\qquad -2\lambda(1-\lambda)\cdot\frac{1}{2}\int_0^{2\pi}\left[p(\theta)p(\theta+\pi)-p'(\theta)p'(\theta+\pi)\right]d\theta\\
	&=\left(2\lambda^2-2\lambda+1\right)A_{\M}-2\lambda(1-\lambda)\cdot\frac{1}{2}\int_0^{2\pi}\left[p(\theta)p(\theta+\pi)-p'(\theta)p'(\theta+\pi)\right]d\theta.
\end{align*}
Let $\displaystyle \Psi_{\M}=\frac{1}{2}\int_0^{2\pi}\left[p(\theta)p(\theta+\pi)-p'(\theta)p'(\theta+\pi)\right]d\theta$, then the oriented area of $\Eq_{\lambda}(\M)$ is in the following form.
\begin{align}
\label{OrientedAreaFormula}\widetilde{A}_{E_{\lambda}(\M)}=(2\lambda^2-2\lambda+1)A_{\M}-2\lambda(1-\lambda)\Psi_{\M}.
\end{align}

Let us find formula for $\Psi_{\M}$ in terms of coefficients of Fourier series of the Minkowski support function $p(\theta)$. By (\ref{Fourierofp}) and (\ref{Fourierofpprime}) there are also equalities:
\begin{align}
\nonumber p(\theta+\pi) &=a_0+\sum_{n=1}^{\infty}(-1)^n\big(a_n\cos(n\theta)+b_n\sin(n\theta)\big), \\ 
\nonumber p'(\theta+\pi) &=\sum_{n=1}^{\infty}(-1)^nn\big(-a_n\sin(n\theta)+b_n\cos(n\theta)\big),\\
\label{Phithetadef}\Psi_{\M} &=\pi a_0^2-\frac{\pi}{2}\sum_{n=2}^{\infty}(-1)^n(n^2-1)(a_n^2+b_n^2).
\end{align}
Then bounds of $\Psi_{\M}$ are as follows.
\begin{align}\label{IneqPhitheta}
\pi a_0^2-\frac{\pi}{2}\sum_{n=2}^{\infty}(n^2-1)(a_n^2+b_n^2)\leqslant \Psi_{\M} &\leqslant \pi a_0^2+\frac{\pi}{2}\sum_{n=2}^{\infty}(n^2-1)(a_n^2+b_n^2).
\end{align}
By (\ref{Lengthofmfourier}) and (\ref{Areaofmfourier}) one can rewrite (\ref{IneqPhitheta}) as
\begin{align}\label{InePhithetaBetter}
A_{\M}\leqslant \Psi_{\M} &\leqslant \frac{L_{\M}^2}{2\pi}-A_{\M}
\end{align}
Then using  (\ref{InePhithetaBetter}) to (\ref{OrientedAreaFormula}) and because the map $\M\ni\gamma(\theta)\mapsto\gamma_{\frac{1}{2}}(\theta)\in\Eq_{\frac{1}{2}}(\M)$ for $\theta\in[0,2\pi]$ is the double covering of the Wigner caustic of $\M$, one can obtain (\ref{ThmA}), (\ref{ThmB}), (\ref{ThmC}).

To prove (\ref{ThmD}) and (\ref{ThmE}) let us notice that $\M$ is a curve of constant width if and only if coefficients $a_{2n}, b_{2n}$ for $n\geqslant 1$ in the Fourier series of $p(\theta)$ are all equal to zero \cite{F1, G4}, and then the formula (\ref{Phithetadef}) of $\Psi_{\M}$ is as follows:
\begin{align*}
\Psi_{\M} &=\pi a_0^2-\frac{\pi}{2}\sum_{n=2}^{\infty}(-1)^n(n^2-1)(a_n^2+b_n^2)\\
		&=\pi a_0^2+\frac{\pi}{2}\sum_{n=2, n\text{ is odd}}^{\infty}(n^2-1)(a_n^2+b_n^2)\\
		&=2\pi a_0^2-A_{\M}\\
		&=\frac{L_{\M}^2}{2\pi}-A_{\M}.
\end{align*}
\end{proof}

A simple consequence of the above theorem is the following remark and also the main result, the improved isoperimetric inequality for planar ovals.

\begin{rem}
Let $\M$ be an positively oriented oval. Then Theorem \ref{ThmBoundedAreas} gives us that $\widetilde{A}_{E_{\frac{1}{2}}(\M)}\leqslant 0$, which leads to the fact that the Wigner caustic of $\M$ has a reversed orientation against that of the original curve $\M$.
\end{rem}

\begin{thm}(The improved isoperimetric inequality)\label{ImprovedIsoperimetricInequalityThm}
If $\M$ is an oval of the length $L_{\M}$, enclosing a region of the area $A_{\M}$, then
\begin{align}\label{ImprovedIsoperimetricInequality}
L_{\M}^2\geqslant 4\pi A_{\M}+8\pi|\widetilde{A}_{E_{\frac{1}{2}}(\M)}\big|,
\end{align}
where $\widetilde{A}_{E_{\frac{1}{2}}(\M)}$ is an oriented area of the Wigner caustic of $\M$, and equality (\ref{ImprovedIsoperimetricInequality}) holds if and only if $\M$ is a curve of constant width.
\end{thm}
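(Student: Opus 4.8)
The plan is to read off the inequality directly from Theorem \ref{ThmBoundedAreas}, which already contains all of the analytic content. The starting point is part (ii), i.e. inequality (\ref{ThmB}):
\begin{align*}
A_{\M}-\frac{L_{\M}^2}{4\pi}\leqslant 2\widetilde{A}_{E_{\frac{1}{2}}(\M)}\leqslant 0.
\end{align*}
The right-hand estimate says that $\widetilde{A}_{E_{\frac{1}{2}}(\M)}\leqslant 0$, hence $\big|\widetilde{A}_{E_{\frac{1}{2}}(\M)}\big|=-\widetilde{A}_{E_{\frac{1}{2}}(\M)}$ — this is exactly the observation recorded in the Remark preceding the theorem, and it is what lets the absolute value enter the final bound.

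Next I would feed this identity into the left-hand estimate. Writing $2\widetilde{A}_{E_{\frac{1}{2}}(\M)}=-2\big|\widetilde{A}_{E_{\frac{1}{2}}(\M)}\big|$, the bound $A_{\M}-\frac{L_{\M}^2}{4\pi}\leqslant -2\big|\widetilde{A}_{E_{\frac{1}{2}}(\M)}\big|$ rearranges to $\frac{L_{\M}^2}{4\pi}\geqslant A_{\M}+2\big|\widetilde{A}_{E_{\frac{1}{2}}(\M)}\big|$, and multiplying through by $4\pi$ yields precisely (\ref{ImprovedIsoperimetricInequality}). For the equality statement I would invoke part (v), i.e. (\ref{ThmE}): since all the manipulations above are reversible algebraic rearrangements, equality in (\ref{ImprovedIsoperimetricInequality}) is equivalent to $2\widetilde{A}_{E_{\frac{1}{2}}(\M)}=A_{\M}-\frac{L_{\M}^2}{4\pi}$, and by (v) this holds if and only if $\M$ is a curve of constant width.

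I expect no real obstacle: the genuinely substantive work — the computation of $\widetilde{A}_{E_{\lambda}(\M)}$ via (\ref{OrientedAreaFormula}), the Fourier-series bound $A_{\M}\leqslant\Psi_{\M}\leqslant\frac{L_{\M}^2}{2\pi}-A_{\M}$ in (\ref{InePhithetaBetter}), the factor $2$ from the double covering $\gamma\mapsto\gamma_{\frac{1}{2}}$, and the characterisation of constant-width ovals by the vanishing of the even Fourier coefficients of $p$ — has all been carried out already in the proof of Theorem \ref{ThmBoundedAreas}. The only point one must be careful to state correctly is the distinction between $\widetilde{A}_{E_{\frac{1}{2}}(\M)}$ and the integral $\tfrac12\int_0^{2\pi}\bigl[P_{\frac12}^2(\theta)-P_{\frac12}'^2(\theta)\bigr]\,d\theta$ (which equals $2\widetilde{A}_{E_{\frac{1}{2}}(\M)}$ because of the double covering); since this is already built into the way (ii) and (v) are phrased, it suffices to quote those two parts verbatim.
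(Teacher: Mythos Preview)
Your proposal is correct and is exactly the argument the paper has in mind: the paper does not give a separate proof of Theorem~\ref{ImprovedIsoperimetricInequalityThm} at all, but simply declares it ``a simple consequence'' of Theorem~\ref{ThmBoundedAreas}, and the derivation you spell out from parts~(ii) and~(v) is precisely that consequence. Your commentary on the double-covering factor of~$2$ and on the role of the Remark is accurate and matches the paper's setup.
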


\begin{rem}
The improved isoperimetric inequality becomes isoperimetric inequality if and only if $\widetilde{A}_{E_{\frac{1}{2}}(\M)}=0$, that means when $\M$ has the center of symmetry.
\end{rem}

\begin{figure}[h]
\centering
\includegraphics[scale=0.35]{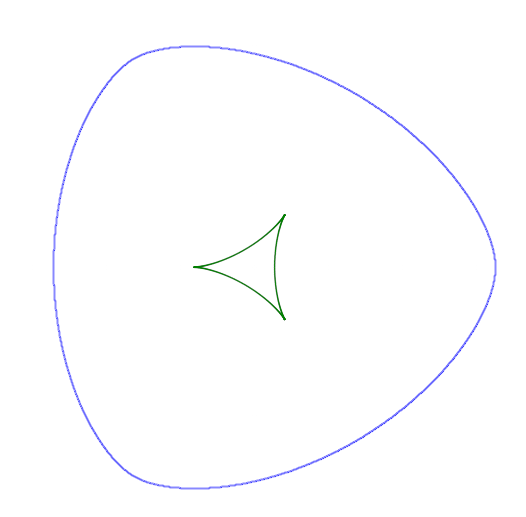}
\caption{An oval $\M_3$ of constant width and the Wigner caustic of $\M_3$. The Minkowski support function of $\M_3$ is $p_3(\theta)=\cos 3\theta+11$.}
\label{PictureConstantWidth}
\end{figure}

\begin{thm}(Barbier's Theorem)\cite{B2}
Let $\M$ be a curve of constant width $w$. Then the length of $\M$ is equal to $\pi w$.
\end{thm}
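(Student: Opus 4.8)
The plan is to reduce the width of $\M$ to a simple expression in the Minkowski support function and then integrate, using Cauchy's formula.

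First I would identify the width in the polar tangential coordinate of Section 2. The two parallel tangent lines at a parallel pair $\gamma(\theta),\gamma(\theta+\pi)$ have outward normals $(\cos\theta,\sin\theta)$ and $(\cos(\theta+\pi),\sin(\theta+\pi))$, and their signed distances from the origin $O$ are $p(\theta)$ and $p(\theta+\pi)$ respectively. Since these two normals are antiparallel, the perpendicular distance between the two lines is the sum of the two signed distances, so the width in direction $\theta$ equals $p(\theta)+p(\theta+\pi)$. Therefore the hypothesis that $\M$ has constant width $w$ reads
\begin{align*}
p(\theta)+p(\theta+\pi)=w\qquad\text{for all }\theta.
\end{align*}

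Next I would integrate this identity over a full period. By Cauchy's formula (\ref{CauchyFormula}) we have $L_{\M}=\int_0^{2\pi}p(\theta)\,d\theta$, and by the $2\pi$-periodicity of $p$ the shifted integral satisfies $\int_0^{2\pi}p(\theta+\pi)\,d\theta=\int_0^{2\pi}p(\theta)\,d\theta=L_{\M}$. Adding the two contributions gives
\begin{align*}
2L_{\M}=\int_0^{2\pi}\big[p(\theta)+p(\theta+\pi)\big]\,d\theta=\int_0^{2\pi}w\,d\theta=2\pi w,
\end{align*}
whence $L_{\M}=\pi w$, as claimed.

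As an alternative route that ties in with the characterization of constant width used in Theorem \ref{ThmBoundedAreas}, I would argue via Fourier coefficients. Using $p(\theta+\pi)=a_0+\sum_{n\geqslant 1}(-1)^n(a_n\cos n\theta+b_n\sin n\theta)$, the width becomes $p(\theta)+p(\theta+\pi)=2a_0+2\sum_{n\geqslant 1,\ n\text{ even}}(a_n\cos n\theta+b_n\sin n\theta)$; constancy forces $a_{2n}=b_{2n}=0$ for $n\geqslant 1$ together with $w=2a_0$, and then (\ref{Lengthofmfourier}) yields $L_{\M}=2\pi a_0=\pi w$. The only point that requires genuine care is the geometric step identifying the width with $p(\theta)+p(\theta+\pi)$: one must respect the orientation convention for the support function so that the two antiparallel normals contribute additively. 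Once that identification is in place the result is immediate, so I expect no substantive obstacle beyond fixing this sign convention.
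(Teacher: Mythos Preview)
Your argument is correct: the identification of the width with $p(\theta)+p(\theta+\pi)$ is the standard one for the polar tangential coordinate set up in Section~2, and once that is in hand, integrating and invoking Cauchy's formula~(\ref{CauchyFormula}) gives $L_{\M}=\pi w$ immediately. The Fourier alternative you sketch is equally valid and dovetails nicely with the constant-width characterization used in the proof of Theorem~\ref{ThmBoundedAreas}.

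However, there is nothing to compare against: the paper does not supply a proof of Barbier's Theorem. It is stated with a citation to~\cite{B2} as a classical result and is then used only to derive the subsequent corollary. So your proposal is not reproducing or diverging from the paper's proof---it is simply filling in a proof the paper chose to omit. That said, both routes you offer are entirely in the spirit of the paper's support-function/Fourier toolkit, and either would be a natural self-contained insertion if one wanted the paper to be independent of~\cite{B2} at this point.
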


By Barbier's Theorem and Theorem \ref{ImprovedIsoperimetricInequalityThm} one can get the following corollary.
\begin{cor}
Let $\M$ be an oval of constant width $w$, enclosing region with the area $A_{\M}$. Then
\begin{align*}
A_{\M}=\frac{\pi w^2}{4}-2\left|\widetilde{A}_{E_{\frac{1}{2}}(\M)}\right|,
\end{align*}
where $\widetilde{A}_{E_{\frac{1}{2}}(\M)}$ is an oriented area of the Wigner caustic of $\M$.
\end{cor}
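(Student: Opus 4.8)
The plan is to combine the two facts that are already available: the improved isoperimetric inequality (Theorem \ref{ImprovedIsoperimetricInequalityThm}) together with its equality characterization, and Barbier's Theorem. Since $\M$ is assumed to be an oval of constant width, it falls precisely into the equality case of Theorem \ref{ImprovedIsoperimetricInequalityThm}, so the inequality \eqref{ImprovedIsoperimetricInequality} becomes the identity
\begin{align*}
L_{\M}^2 = 4\pi A_{\M} + 8\pi\left|\widetilde{A}_{E_{\frac{1}{2}}(\M)}\right|.
\end{align*}

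Next I would substitute the value of the perimeter supplied by Barbier's Theorem. Since $\M$ has constant width $w$, that theorem gives $L_{\M} = \pi w$, hence $L_{\M}^2 = \pi^2 w^2$. Plugging this into the displayed identity yields
\begin{align*}
\pi^2 w^2 = 4\pi A_{\M} + 8\pi\left|\widetilde{A}_{E_{\frac{1}{2}}(\M)}\right|.
\end{align*}

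Finally, dividing both sides by $4\pi$ and rearranging isolates $A_{\M}$:
\begin{align*}
A_{\M} = \frac{\pi w^2}{4} - 2\left|\widetilde{A}_{E_{\frac{1}{2}}(\M)}\right|,
\end{align*}
which is the claimed formula. There is no real obstacle here: the entire content of the corollary is packaged in the equality clause of Theorem \ref{ImprovedIsoperimetricInequalityThm} and in Barbier's Theorem, and the remaining work is the elementary algebraic manipulation above. The only point one should check is that a curve of constant width is indeed an oval in the sense used throughout the paper, so that Theorem \ref{ImprovedIsoperimetricInequalityThm} applies verbatim; this is immediate from the definition of constant width for an oval given earlier in the text.
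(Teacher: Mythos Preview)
Your argument is correct and matches the paper's own approach: the corollary is stated immediately after Barbier's Theorem with the remark that it follows from Barbier's Theorem together with Theorem~\ref{ImprovedIsoperimetricInequalityThm}, and your proof simply spells out that implication via the equality case of \eqref{ImprovedIsoperimetricInequality} and the substitution $L_{\M}=\pi w$.
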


One can check that curve $M$ for which $p(\theta)=\cos 3\theta+11$ (see Fig. \ref{PictureConstantWidth}) is an oval of constant width $w=22$,  enclosing the area $A_{\M}=117\pi$, and a signed area of the Wigner caustic of $\M$ is equal to $\widetilde{A}_{E_{\frac{1}{2}}(\M)}=-2\pi$. 

In Proposition \ref{PropCusps} we present explicitly curves for which the Wigner caustic has ecactly $2n+1$ cusps (this number must be odd and not smaller than $3$ - \cite{B1, G3}). $M_7$ and $\Eq_{\frac{1}{2}}(\M_7)$ are shown in Fig. \ref{PictureConstantWidth7cusps}.
 
\begin{prop}\label{PropCusps}
Let $n$ be a positive integer. Let $M_{2n+1}$ be a curve for which $p_{2n+1}(\theta)=\cos\big[\theta(2n+1)\big]+(2n+1)^2+2$ is its Minkowski support function. Then $\M$ is an oval of constant width and $\Eq_{\frac{1}{2}}(\M_{2n+1})$ has exactly $2n+1$ cusps.
\end{prop}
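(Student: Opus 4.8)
The plan is to verify the three assertions separately; throughout set $m=2n+1$ and $p(\theta)=p_{m}(\theta)=\cos(m\theta)+m^{2}+2$. First I would check that $M_{m}$ is an oval: by \eqref{CurvatureM} the curve \eqref{ParameterizationM} built from a smooth $2\pi$-periodic support function is a positively oriented regular oval as soon as its radius of curvature $p+p''$ is everywhere positive, and here $p(\theta)+p''(\theta)=(1-m^{2})\cos(m\theta)+m^{2}+2\geq (m^{2}+2)-(m^{2}-1)=3>0$ since the oscillating term has amplitude $m^{2}-1$. Thus $\kappa>0$ everywhere, so $M_{m}$ is a regular closed convex curve, and being parameterized by its tangent angle over one full period it is simple, hence an oval. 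For the constant-width claim I would invoke the characterization recalled in the proof of Theorem \ref{ThmBoundedAreas} (see \cite{F1,G4}): an oval has constant width iff all even-indexed Fourier coefficients of its support function vanish. The only nonzero coefficients of $p$ in \eqref{Fourierofp} are $a_{0}=m^{2}+2$ and $a_{m}=1$, and since $m=2n+1$ is odd every even-indexed coefficient vanishes; so $M_{m}$ has constant width.

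Next I would write the Wigner caustic down explicitly. By \eqref{ParameterizationEqM} with $\lambda=\tfrac12$ it is the image of $\gamma_{1/2}$, governed by $P_{1/2}(\theta)=\tfrac12\big(p(\theta)-p(\theta+\pi)\big)$; since $m$ is odd, $\cos\!\big(m(\theta+\pi)\big)=-\cos(m\theta)$, so $P_{1/2}(\theta)=\cos(m\theta)$. Differentiating the two coordinate functions in \eqref{ParameterizationEqM} gives the identity $\gamma_{1/2}'(\theta)=\big(P_{1/2}(\theta)+P_{1/2}''(\theta)\big)(-\sin\theta,\cos\theta)=(1-m^{2})\cos(m\theta)\,(-\sin\theta,\cos\theta)$. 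Hence $\gamma_{1/2}$ is singular precisely at the $2m$ zeros $\theta_{k}=\tfrac{(2k+1)\pi}{2m}$, $k=0,\dots,2m-1$, of $\cos(m\theta)$ in $[0,2\pi)$; at each of these $\cos(m\theta)$ has a simple zero (derivative $\mp m\neq0$), so $\gamma_{1/2}'$ vanishes to first order, and a short computation shows $\gamma_{1/2}''(\theta_{k})$ is a nonzero multiple of $(-\sin\theta_{k},\cos\theta_{k})$ while $\gamma_{1/2}'''(\theta_{k})$ has a nonzero component along $(\cos\theta_{k},\sin\theta_{k})$ — so each $\theta_{k}$ is an ordinary cusp and there are no other singular points. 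Since $\gamma_{1/2}(\theta+\pi)=\gamma_{1/2}(\theta)$ and $\theta\mapsto\gamma_{1/2}(\theta)$ is the double covering of $\Eq_{\frac12}(M_{m})$, the $2m$ singular parameters fall into the $m$ antipodal pairs $\{\theta_{k},\theta_{k+m}\}$, $k=0,\dots,m-1$, so $\Eq_{\frac12}(M_{m})$ has \emph{at most} $m=2n+1$ cusps.

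The remaining point — the only one I expect to require more than a routine computation — is that these $m$ cusp points are pairwise distinct, so the count is exactly $2n+1$. In the complex coordinate $z=\gamma_{1/2,1}+i\gamma_{1/2,2}$ one finds $z(\theta)=(n+1)e^{-2in\theta}-n\,e^{2i(n+1)\theta}$, and at $\theta_{k}$ one has $e^{2im\theta_{k}}=e^{i(2k+1)\pi}=-1$, whence $e^{2i(n+1)\theta_{k}}=-e^{-2in\theta_{k}}$ and therefore $z(\theta_{k})=(2n+1)\,e^{-2in\theta_{k}}$. Thus all the cusps lie on the circle of radius $2n+1$ about the origin, at the arguments $-\tfrac{n(2k+1)\pi}{2n+1}$; taking representatives $k\in\{0,\dots,2n\}$, two of these agree modulo $2\pi$ only if $(2n+1)\mid n(k-k')$, which, since $\gcd(n,2n+1)=1$, forces $k=k'$. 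Hence the $2n+1$ cusps are distinct and the proof is finished. In summary, the analytic content reduces to the two identities $p+p''=(1-m^{2})\cos(m\theta)+m^{2}+2$ and $\gamma_{1/2}'=(1-m^{2})\cos(m\theta)\,(-\sin\theta,\cos\theta)$; the only care needed is in bookkeeping the double cover and in the final coprimality argument.
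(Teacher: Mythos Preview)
Your proof is correct and in its core it coincides with the paper's: both reduce the location of cusps to the zeros of $\cos\big((2n+1)\theta\big)$, yielding the parameters $\theta_k=\dfrac{(2k+1)\pi}{2(2n+1)}$.

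The route is somewhat different, however. The paper invokes Berry's criterion from \cite{B1} that cusps of $\Eq_{1/2}(\M)$ for an oval occur exactly where $\kappa(\theta)=\kappa(\theta+\pi)$, then solves this equation on $[0,\pi]$. You bypass that citation by differentiating the parameterization \eqref{ParameterizationEqM} directly, obtaining $\gamma_{1/2}'(\theta)=(1-m^{2})\cos(m\theta)\,(-\sin\theta,\cos\theta)$; this is equivalent, since $P_{1/2}+P_{1/2}''=\tfrac12\big(\rho(\theta)-\rho(\theta+\pi)\big)$. Your approach buys two things the paper's sketch leaves implicit: (i) the verification via $\gamma_{1/2}''$ and $\gamma_{1/2}'''$ that each singular parameter is an \emph{ordinary} cusp, and (ii) the coprimality argument showing that the $2n+1$ singular parameters in $[0,\pi)$ give $2n+1$ \emph{distinct} image points (so ``exactly $2n+1$'' is genuinely justified, not merely a count of parameter values). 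The paper's version is shorter but relies on the external reference; yours is self-contained and slightly more rigorous on the count.
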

\begin{proof}
$\M$ is singular if and only if $p_{2n+1}(\theta)+p''_{2n+1}(\theta)=0$, but one can check that is impossible. $\M$ is curve of constant width, because $p_{2n+1}(\theta)+p_{2n+1}(\theta+\pi)$ is constant for all values of $\theta$.

The cusp of the Wigner caustic of an oval appears when curvatures of the original curve at points in parallel pair are equal \cite{B1}.

One can check that equation $\kappa(\theta)=\kappa(\theta+\pi)$, where $\theta\in\left[0,\pi\right]$ holds if and only if $\displaystyle\theta=\frac{\pi+2k\pi}{4n+2}$ for $k\in\{0,1,2,\ldots,2n\}$.
\end{proof}

\begin{figure}[h]
\centering
\includegraphics[scale=0.35]{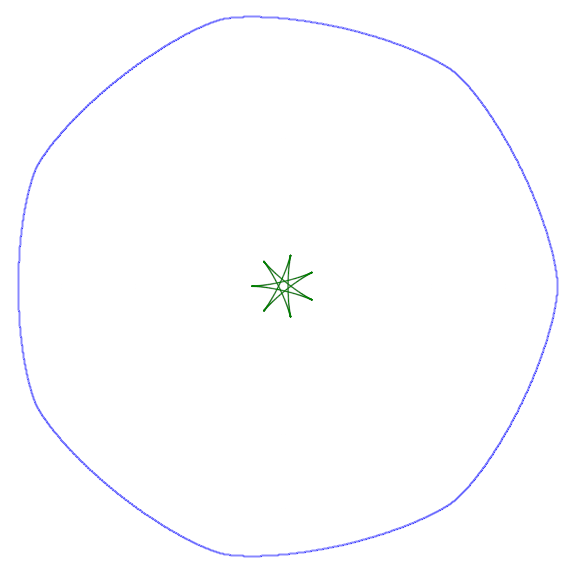}
\includegraphics[scale=0.35]{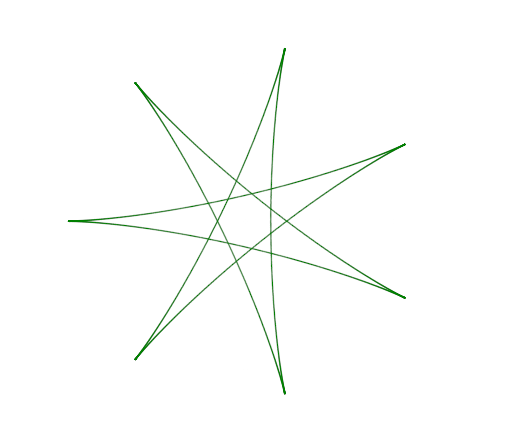}
\caption{
An oval $\M_7$ of constant width and $\Eq_{\frac{1}{2}}(\M_7)$. The Minkowski support function of $\M_7$ is $p_7(\theta)=\cos 7\theta+51$.}
\label{PictureConstantWidth7cusps}
\end{figure}


\section{The stability of the improved isoperimetric inequality}

A bounded convex subset in $\mathbb{R}^n$ is said to be an \textit{$n$ -- dimensional convex body} if it is closed and has interior points. Let $\mathcal{C}^n$ denote the set of all $n$ -- dimensional convex bodies.

There are many important inequalities in convex geometry and differential geometry, such as the isoperimetric inequality, the Brunn--Minkowski inequality,\linebreak Aleksandrov--Fenchel inequality. The stability property of them are of great interest in geometric analysis, see \cite{F2, G2, H3, PX1, S2} and the literature therein. An inequality in convex geometry can be written as
\begin{align}\label{IneqConvexGeometry} 
\Phi(K)\geqslant 0,
\end{align}
where $\Phi:\mathcal{C}^n\to\mathbb{R}$ is a real-valued function and (\ref{IneqConvexGeometry}) holds for all $K\in\mathcal{C}^n$. Let $\mathcal{C}^n_{\Phi}$ be a subset of $\mathcal{C}^n$ for which equality in (\ref{IneqConvexGeometry}) holds. 

For example, if $n=2$, and like in previous sections let $L_{\partial K}$ denote the length of the boundary of $K$, let $A_{\partial K}$ denote the area enclosed by $\partial K$ (i.e. the area of $K$), $\Phi(K)=L_{\partial K}^2-4\pi A_{\partial K}$. Then inequality $\Phi(K)\geqslant 0$ is then the classical isoperimetric inequality in $\mathbb{R}^2$. In this case $\mathcal{C}^2_{\Phi}$ is a set of disks.

In this section we will study stability properties associated with (\ref{IneqConvexGeometry}). We ask if $K$ must by close to a member of $\mathcal{C}^n_{\Phi}$ whenever $\Phi(K)$ is close to zero. \linebreak Let $d:\mathcal{C}^n\times\mathcal{C}^n\to\mathbb{R}$ denoted in some sense the deviation between two convex bodies. $d$ should satisfy two following conditions:
\begin{enumerate}[(i)]
\item $d(K,L)\geqslant 0$ for all $K,L\in\mathcal{C}^n$,
\item $d(K,L)=0$ if and only if $K=L$.
\end{enumerate}

If $\Phi, \mathcal{C}^n_{\Phi}$ and $d$ are given, then the \textit{stability problem} associated with (\ref{IneqConvexGeometry}) is as follows.

\textit{Find positive constants $c,\alpha$ such that for each $K\in\mathcal{C}^n$, there exists $N\in C^n_{\Phi}$ such that 
\begin{align}\label{StabilityIneq}
\Phi(K)\geqslant cg^{\alpha}(K,N).
\end{align}}

From this point let us assume that $n=2$ and by Theorem \ref{ImprovedIsoperimetricInequalityThm} let
\begin{align}\label{IneqConvexProblem}
\Phi(K)=L_{\partial K}^2-4\pi A_{\partial K}-8\pi\left|\widetilde{A}_{E_{\frac{1}{2}}(\M)}\right|\geqslant 0.
\end{align}
From Theorem \ref{ImprovedIsoperimetricInequalityThm} one can see that $C^2_{\Phi}$ consists of bodies of constant width.

Let us recall two $d$ measure functions.

Let $K$ and $N$ be two convex bodies with respective support functions $p_{\partial K}$ and $p_{\partial N}$. Usually to measure the deviation between $K$ and $N$ one can use the \textit{Hausdorff distance},
\begin{align}\label{HausdorffDistance}
d_{\infty}(K,N)=\max_{\theta}\Big|p_{\partial K}(\theta)-p_{\partial N}(\theta)\Big|.
\end{align}

Another such measure is the measure that corresponds to the $L_2$-metric in the function space. It is defined by
\begin{align}\label{LTwoDistance}
d_2(K,N)=\left(\int_0^{2\pi}\Big|p_{\partial K}(\theta)-p_{\partial N}(\theta)\Big|^2d\theta\right)^{\frac{1}{2}}.
\end{align}
It is obvious that $d_{\infty}(K,N)=0$ (or $d_2(K,N)=0$) if and only of $K=N$.

\begin{lem}\label{LemmaTrig}
Let $c_k, d_k\in\mathbb{R}$ for $k\in\{1,2,\ldots,n\}$. Then
\begin{align*}
\max_{\theta}\left|\sum_{k=1, \text{k is odd}}^n\big(c_k\cos k\theta+d_k\sin k\theta\big)\right|\leqslant\max_{\theta}\left|\sum_{k=1}^n\big(c_k\cos k\theta+d_k\sin k\theta\big)\right|.
\end{align*}
\end{lem}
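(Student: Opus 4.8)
The plan is to reduce the ``odd part'' estimate to the elementary fact that replacing $\theta$ by $\theta+\pi$ negates every odd harmonic and fixes every even one, and that averaging a function with a reflection of it does not increase the sup norm. Concretely, write $f(\theta)=\sum_{k=1}^n\big(c_k\cos k\theta+d_k\sin k\theta\big)$ and $g(\theta)=\sum_{k=1,\ k\text{ odd}}^n\big(c_k\cos k\theta+d_k\sin k\theta\big)$ for the odd-indexed truncation. Substituting $\theta\mapsto\theta+\pi$ gives
\begin{align*}
f(\theta+\pi)=\sum_{k=1}^n(-1)^k\big(c_k\cos k\theta+d_k\sin k\theta\big),
\end{align*}
since $\cos k(\theta+\pi)=(-1)^k\cos k\theta$ and likewise for sine. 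Therefore $\tfrac12\big(f(\theta)-f(\theta+\pi)\big)$ keeps exactly the terms with $k$ odd, i.e.
\begin{align*}
g(\theta)=\frac{f(\theta)-f(\theta+\pi)}{2}.
\end{align*}

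From this identity the inequality is immediate by the triangle inequality: for every $\theta$,
\begin{align*}
|g(\theta)|\leqslant\frac12|f(\theta)|+\frac12|f(\theta+\pi)|\leqslant\frac12\max_{\varphi}|f(\varphi)|+\frac12\max_{\varphi}|f(\varphi)|=\max_{\varphi}|f(\varphi)|,
\end{align*}
where in the middle step I used that $f(\theta+\pi)$ ranges over the same set of values as $f$. Taking the maximum over $\theta$ on the left gives $\max_\theta|g(\theta)|\leqslant\max_\theta|f(\theta)|$, which is exactly the claimed bound.

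There is essentially no obstacle here; the only thing to be slightly careful about is the bookkeeping of which coefficients survive the operation $f(\theta)\mapsto\tfrac12(f(\theta)-f(\theta+\pi))$, and to note that the statement is trivially true (with equality) when all even-indexed $c_k,d_k$ already vanish, so nothing degenerate occurs. If one wanted a parallel $L_2$-version for later use in the stability argument, the same identity combined with the triangle inequality in $L_2$ (or directly Parseval, since the odd-harmonic energy is a subsum of the total energy) would give the analogous statement; but for the lemma as stated the sup-norm argument above suffices.
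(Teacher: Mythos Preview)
Your proof is correct and is in fact cleaner than the paper's. Both arguments hinge on the same parity observation---that the shift $\theta\mapsto\theta+\pi$ negates the odd harmonics and fixes the even ones---but they exploit it differently. You package this as the identity $g(\theta)=\tfrac12\big(f(\theta)-f(\theta+\pi)\big)$ and finish in one line with the triangle inequality for the sup norm. The paper instead splits $f=f_{odd}+f_{even}$, picks $\theta_0$ where $f_{odd}$ attains its maximum, notes that $f_{odd}(\theta_0+\pi)=-f_{odd}(\theta_0)$ while $f_{even}(\theta_0+\pi)=f_{even}(\theta_0)$, and then does a two-case analysis on the sign of $f_{even}(\theta_0)$ to exhibit a specific point ($\theta_0$ or $\theta_0+\pi$) where $|f|\geqslant f_{odd}(\theta_0)=\max_\theta|f_{odd}|$. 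Your averaging argument avoids this case split entirely and generalizes immediately to any translation-invariant norm, which is a small bonus; the paper's argument, on the other hand, is slightly more constructive in that it names an explicit $\theta$ witnessing the inequality.
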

\begin{proof}
Let 
\begin{align*}
f_{odd}(\theta) &=\sum_{k=1, \text{k is odd}}^n\big(c_k\cos k\theta+d_k\sin k\theta\big),\\
f_{even}(\theta) &=\sum_{k=1, \text{k is even}}^n\big(c_k\cos k\theta+d_k\sin k\theta\big),\\
f(\theta) & =f_{odd}(\theta)+f_{even}(\theta).
\end{align*}
One can see that $f_{odd}$ is bounded, $2\pi$-periodic and $f_{odd}(\theta)=-f_{odd}(\theta+\pi)$. 
Let $\theta_0$ be an argument for which $\displaystyle f_{odd}(\theta_0)=\max_{\theta}f_{odd}(\theta)$, then $\displaystyle f_{odd}(\theta_0+\pi)=\min_{\theta}f_{odd}(\theta)=-f_{odd}(\theta_0)$.

Because $f_{even}$ is $\pi$-periodic, then $f_{even}(\theta_0)=f_{even}(\theta_0+\pi)$. One can see that:
\begin{itemize}
\item If $f_{even}(\theta_0)\geqslant 0$, then 

$\displaystyle\max_{\theta}|f(\theta)|\geqslant |f(\theta_0)|=f(\theta_0)\geqslant f_{odd}(\theta_0)=\max_{\theta}|f_{odd}(\theta)|$.
\item If $f_{even}(\theta_0)<0$, then

$\displaystyle\max_{\theta}|f(\theta)|\geqslant |f(\theta_0+\pi)|=-f(\theta_0+\pi)\geqslant -f_{odd}(\theta_0+\pi)=\max_{\theta}|f_{odd}(\theta)|$.
\end{itemize}
\end{proof}

\begin{defn}
Let $p_{\M}$ be the Minkowski support function of a positively oriented oval $\M$ of length $L_{\M}$. Then 
\begin{align}\label{SupportWM}
p_{W_{\M}}(\theta)=\frac{L_{\M}}{2\pi}+\frac{p_{\M}(\theta)-p_{\M}(\theta+\pi)}{2}
\end{align}
will be the support function of a curve $W_{\M}$ which will be called the \textit{Wigner caustic type curve associated with $\M$}.
\end{defn}

\begin{prop}
Let $W_{\M}$ be the Wigner caustic type curve associated with an oval $\M$. Then $W_{\M}$ has following properties:
\begin{enumerate}[(i)]
\item $W_{\M}$ is an oval of constant width,
\item $L_{W_{\M}}=L_{\M}$,
\item $\Eq_{\frac{1}{2}}(W_{\M})=\Eq_{\frac{1}{2}}(\M)$,
\item $A_{W_M}\geqslant A_{\M}$ and the equality holds if and only if $\M$ is a curve of constant width,
\item ${W_M}=\M$ if and only if $\M$ is a curve of constant width.
\end{enumerate}
\end{prop}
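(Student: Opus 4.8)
The plan is to verify the five properties in order, using throughout the Fourier expansion of $p_{\M}$ as in (\ref{Fourierofp}) and the formula (\ref{SupportWM}) for $p_{W_{\M}}$. First I would write out $p_{W_{\M}}$ explicitly in Fourier form. Since $p_{\M}(\theta)-p_{\M}(\theta+\pi)=2\sum_{n\text{ odd}}(a_n\cos n\theta+b_n\sin n\theta)$ by the expansion of $p_{\M}(\theta+\pi)$ given before (\ref{Phithetadef}), and $L_{\M}=2\pi a_0$ by (\ref{Lengthofmfourier}), we get
\begin{align*}
p_{W_{\M}}(\theta)=a_0+\sum_{n=1,\ n\text{ odd}}^{\infty}\big(a_n\cos n\theta+b_n\sin n\theta\big).
\end{align*}
This single computation essentially drives everything. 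For (i), I would first check that $W_{\M}$ is a genuine oval: $W_{\M}$ is regular and convex iff $p_{W_{\M}}+p_{W_{\M}}''>0$, which by (\ref{CurvatureM}) amounts to controlling the curvature; here one should note that this may fail for a general oval $\M$, so strictly speaking property (i) as stated requires either an additional smallness hypothesis or the convention that "oval" is used loosely — I would follow the paper's convention and argue that $p_{W_{\M}}+p_{W_{\M}}''=\frac{1}{2}\big[(p_{\M}(\theta)+p_{\M}''(\theta))+(p_{\M}(\theta+\pi)+p_{\M}''(\theta+\pi))\big]$, which is the average of two positive quantities (the radii of curvature of $\M$ at a parallel pair), hence positive. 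Constant width then follows immediately since $p_{W_{\M}}(\theta)+p_{W_{\M}}(\theta+\pi)=2a_0=\frac{L_{\M}}{\pi}$ is constant, using the criterion cited in the proof of Theorem \ref{ThmBoundedAreas} that a curve has constant width iff its even Fourier coefficients vanish.

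For (ii), apply Cauchy's formula (\ref{CauchyFormula}) / (\ref{Lengthofmfourier}): $L_{W_{\M}}=2\pi\cdot a_0=L_{\M}$, since the constant term of $p_{W_{\M}}$ equals $a_0$. For (iii), recall from (\ref{ParameterizationEqM}) that $\Eq_{\frac{1}{2}}(\M)$ depends on $p_{\M}$ only through $P_{1/2}(\theta)=\frac{1}{2}\big(p_{\M}(\theta)-p_{\M}(\theta+\pi)\big)$; since $p_{W_{\M}}(\theta)-p_{W_{\M}}(\theta+\pi)=p_{\M}(\theta)-p_{\M}(\theta+\pi)$ (the constant $L_{\M}/2\pi$ cancels and the odd part is unchanged), the two Wigner caustics have identical parameterizations, hence coincide as sets. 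For (iv), use Blaschke's formula in Fourier form (\ref{Areaofmfourier}): $A_{W_{\M}}=\pi a_0^2-\frac{\pi}{2}\sum_{n\geq 2,\ n\text{ odd}}(n^2-1)(a_n^2+b_n^2)$, while $A_{\M}=\pi a_0^2-\frac{\pi}{2}\sum_{n\geq 2}(n^2-1)(a_n^2+b_n^2)$, so $A_{W_{\M}}-A_{\M}=\frac{\pi}{2}\sum_{n\geq 2,\ n\text{ even}}(n^2-1)(a_n^2+b_n^2)\geq 0$, with equality iff all even coefficients $a_n,b_n$ ($n\geq 2$) vanish, i.e. iff $\M$ has constant width. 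Property (v) is essentially the equality case of (iv) together with (\ref{SupportWM}): $W_{\M}=\M$ iff $p_{W_{\M}}=p_{\M}$ iff $\frac{L_{\M}}{2\pi}-\frac{p_{\M}(\theta)+p_{\M}(\theta+\pi)}{2}=0$ for all $\theta$, i.e. $p_{\M}(\theta)+p_{\M}(\theta+\pi)\equiv \frac{L_{\M}}{\pi}$, which is exactly the constant-width condition (equivalently, the even coefficients vanish).

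The main obstacle is property (i): the fact that $W_{\M}$ is convex and regular is not automatic from the mere convexity of $\M$ unless one observes the averaging identity $\rho_{W_{\M}}(\theta)=\frac{1}{2}\big(\rho_{\M}(\theta)+\rho_{\M}(\theta+\pi)\big)$ for the radius of curvature; I would state this identity first, since positivity of the right-hand side follows from (\ref{CurvatureM}) applied to $\M$ at both points of the parallel pair, and then regularity/convexity of $W_{\M}$ is immediate and everything downstream is routine Fourier bookkeeping. The remaining parts (ii)--(v) are short consequences of the explicit Fourier expansion above combined with formulas (\ref{Lengthofmfourier}), (\ref{Areaofmfourier}), and the constant-width characterization already used in the proof of Theorem \ref{ThmBoundedAreas}.
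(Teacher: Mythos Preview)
Your treatment of parts (ii)--(v) is correct and, for (iv), is actually cleaner than the paper's: you compute $A_{W_{\M}}-A_{\M}$ directly from the Fourier expansion, whereas the paper deduces (iv) from the improved isoperimetric inequality (Theorem~\ref{ImprovedIsoperimetricInequalityThm}) together with (ii) and (iii).

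However, your argument for (i) contains a genuine error. The ``averaging identity'' you state,
\[
p_{W_{\M}}(\theta)+p_{W_{\M}}''(\theta)=\tfrac{1}{2}\big[(p_{\M}(\theta)+p_{\M}''(\theta))+(p_{\M}(\theta+\pi)+p_{\M}''(\theta+\pi))\big],
\]
is false: since $p_{W_{\M}}(\theta)=a_0+\tfrac{1}{2}\big(p_{\M}(\theta)-p_{\M}(\theta+\pi)\big)$ involves a \emph{difference}, differentiating twice and adding gives
\[
\rho_{W_{\M}}(\theta)=a_0+\tfrac{1}{2}\big(\rho_{\M}(\theta)-\rho_{\M}(\theta+\pi)\big),
\]
not the average of the two radii of curvature. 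Positivity of the right-hand side is therefore not immediate from $\rho_{\M}>0$; indeed the difference $\rho_{\M}(\theta)-\rho_{\M}(\theta+\pi)$ can be large and negative. The paper handles this by writing $\rho_{W_{\M}}$ in Fourier form as $a_0+\sum_{n\text{ odd}}(1-n^2)(a_n\cos n\theta+b_n\sin n\theta)$ and invoking Lemma~\ref{LemmaTrig}, which compares the sup-norm of the odd-harmonic part of a trigonometric series with that of the full series. So the ``obstacle'' you correctly flagged is real, but the resolution you propose does not work; you need Lemma~\ref{LemmaTrig} (or an equivalent device) rather than the averaging claim.
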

\begin{proof}
By (\ref{SupportWM}), to prove that $W_{\M}$ we will show $\rho_{W_{\M}}(\theta)>0$.

By (\ref{Fourierofp}) and (\ref{CurvatureM}) the radius of a curvature of $W_{M}$ is equal to 
\begin{align}
\rho_{W_M}(\theta) &=p_{W_M}(\theta)+p''_{W_M}(\theta)\\ 
\label{CurvatureWM}	&=a_0+\sum_{n=1, \text{n is odd}}^{\infty}(-n^2+1)(a_n\cos n\theta+b_n\sin n\theta)
\end{align}
Because $\displaystyle\rho_{M}(\theta)=a_0+\sum_{n=1}^{\infty}(-n^2+1)(a_n\cos n\theta+b_n\sin n\theta)>0$, then also by (\ref{CurvatureWM}) inequality $\rho_{W_M}>0$ holds. This is a consequence of that the range of \linebreak $\displaystyle\left|\sum_{\text{n is odd}}(-n^2+1)(a_n\cos n\theta+b_n\sin n\theta)\right|$ is a subset of the range of \linebreak $\displaystyle\left|\sum_n(-n^2+1)(a_n\cos n\theta+b_n\sin n\theta)\right|$ - see Lemma \ref{LemmaTrig}.

To check that $W_M$ is a curve of constant width let us notice that \linebreak $p_{W_M}(\theta)+p_{W_M}(\theta+\pi)$ is a constant.

By (\ref{CauchyFormula}) one can get
\begin{align*}
L_{W_M} &=\int_0^{2\pi}p_{W_M}(\theta)d\theta=\int_0^{2\pi}\left(\frac{L_M}{2\pi}+\frac{p_{M}(\theta)-p_{M}(\theta+\pi)}{2}\right)d\theta=L_M.
\end{align*}

By (\ref{ParameterizationEqM}) the support function of the Wigner caustic of $\M$ is \linebreak $\displaystyle\frac{p(\theta)-p(\theta+\pi)}{2}$. One can check that also this is the support function of the Wigner caustic of $W_M$.
Then using the improved isoperimetric inequality it is easy to show that $A_{W_M}\geqslant A_M$ and the equality holds if and only if $M$ is a curve of constant width.
\end{proof}

\begin{thm}\label{ThmStabIneqMax}
Let $K$ be strictly convex domain of area $A_{\partial K}$ and perimeter $L_{\partial K}$ and let $\widetilde{A}_{E_{\frac{1}{2}}(\partial K)}$ denote the oriented area of the Wigner caustic of $\partial K$. Let $W_{K}$ denote the convex body for which $\partial W_{K}$ is the Wigner caustic type curve associated with $\partial K$. Then
\begin{align}\label{StabIneqMax}
L_{\partial K}^2-4\pi A_{\partial K}-8\pi\left|\widetilde{A}_{\Eq_{\frac{1}{2}}(\partial K)}\right|\geqslant 4\pi^2 d_{\infty}^2(K, W_K),
\end{align}
where equality holds if and only if $\partial K$ is a curve of constant width.
\end{thm}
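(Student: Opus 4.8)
The plan is to push everything through to the Fourier coefficients of the Minkowski support function $p:=p_{\partial K}$, written as in \eqref{Fourierofp}. First I would record the three ingredients of the left side of \eqref{StabIneqMax} in Fourier form. By \eqref{Lengthofmfourier} and \eqref{Areaofmfourier} one gets $L_{\partial K}^2-4\pi A_{\partial K}=2\pi^2\sum_{n\geq 2}(n^2-1)(a_n^2+b_n^2)$, while repeating the computation in the proof of Theorem \ref{ThmBoundedAreas} at $\lambda=\tfrac12$ (and using that $\theta\mapsto\gamma_{1/2}(\theta)$ is a double cover of the Wigner caustic) gives $2\widetilde A_{E_{1/2}(\partial K)}=-\tfrac{\pi}{2}\sum_{n\ \mathrm{odd},\,n\geq 3}(n^2-1)(a_n^2+b_n^2)\leq 0$, hence $8\pi|\widetilde A_{E_{1/2}(\partial K)}|=2\pi^2\sum_{n\ \mathrm{odd},\,n\geq 3}(n^2-1)(a_n^2+b_n^2)$. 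Subtracting, the odd harmonics cancel and the whole left side collapses to
\[
\Phi(K):=L_{\partial K}^2-4\pi A_{\partial K}-8\pi\bigl|\widetilde A_{E_{1/2}(\partial K)}\bigr|=2\pi^2\sum_{n\ \mathrm{even},\,n\geq 2}(n^2-1)(a_n^2+b_n^2).
\]

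Next I would handle the right side. From \eqref{SupportWM} and \eqref{Fourierofp}, $p_{\partial W_K}(\theta)=a_0+\sum_{n\ \mathrm{odd}}(a_n\cos n\theta+b_n\sin n\theta)$, so $p_{\partial K}(\theta)-p_{\partial W_K}(\theta)=\sum_{n\ \mathrm{even},\,n\geq 2}(a_n\cos n\theta+b_n\sin n\theta)$ and therefore $d_\infty(K,W_K)=\max_\theta\bigl|\sum_{n\ \mathrm{even},\,n\geq 2}(a_n\cos n\theta+b_n\sin n\theta)\bigr|$. Writing $n=2m$ and $\phi=2\theta$, inequality \eqref{StabIneqMax} is then equivalent to
\[
\sum_{m=1}^{\infty}(4m^2-1)(a_{2m}^2+b_{2m}^2)\ \geq\ 2\Bigl(\max_{\phi}\Bigl|\sum_{m=1}^{\infty}(a_{2m}\cos m\phi+b_{2m}\sin m\phi)\Bigr|\Bigr)^{2}.
\]
To prove this I would put $a_{2m}\cos m\phi+b_{2m}\sin m\phi=r_m\cos(m\phi-\alpha_m)$ with $r_m=\sqrt{a_{2m}^2+b_{2m}^2}$, so that $\bigl|\sum_m r_m\cos(m\phi-\alpha_m)\bigr|\leq\sum_m r_m$ for every $\phi$, and then apply Cauchy--Schwarz with the weights $w_m=4m^2-1$:
\[
\sum_{m}r_m=\sum_{m}\sqrt{w_m}\,r_m\cdot\frac{1}{\sqrt{w_m}}\leq\Bigl(\sum_{m}w_m r_m^2\Bigr)^{1/2}\Bigl(\sum_{m}\frac{1}{4m^2-1}\Bigr)^{1/2}=\frac{1}{\sqrt 2}\Bigl(\sum_{m}w_m r_m^2\Bigr)^{1/2},
\]
since $\sum_{m\geq 1}\frac{1}{4m^2-1}=\tfrac12\sum_{m\geq1}\bigl(\tfrac{1}{2m-1}-\tfrac{1}{2m+1}\bigr)=\tfrac12$. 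Squaring yields exactly the displayed inequality, hence \eqref{StabIneqMax}.

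For the equality case I would argue that equality in \eqref{StabIneqMax} forces equality in the Cauchy--Schwarz step, i.e. $r_m=\mu/(4m^2-1)$ for all $m\geq1$ and some $\mu\geq 0$. If $\mu>0$, then $\sum_m(4m^2-1)^2 r_m^2=\mu^2\sum_m 1=\infty$; but since $\partial K$ is an oval its curvature is continuous, so $\rho_{\partial K}=p+p''\in L^2(S^1)$ and Parseval gives $\sum_{n\geq 2}(n^2-1)^2(a_n^2+b_n^2)<\infty$, a contradiction. Hence $\mu=0$, so all even harmonics of $p$ vanish and $\partial K$ has constant width (equivalently $W_K=K$ by the proposition on $W_\M$); conversely, for a curve of constant width both sides of \eqref{StabIneqMax} are zero. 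The one genuinely delicate point is obtaining the sharp constant: the weights $4m^2-1$ have to be paired against the telescoping series summing to precisely $\tfrac12$, which is what produces the factor $4\pi^2$ (and not a weaker one), and that same extremal configuration is exactly what has to be excluded in the equality analysis using the $C^2$ regularity of $\partial K$.
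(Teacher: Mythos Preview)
Your proof is correct and follows essentially the same route as the paper: both reduce $\Phi(K)$ and $d_\infty(K,W_K)$ to the even Fourier harmonics of the support function, bound $d_\infty$ by $\sum_{m}\sqrt{a_{2m}^2+b_{2m}^2}$, and then apply Cauchy--Schwarz (the paper calls it H\"older) with the weights $4m^2-1$ together with the telescoping identity $\sum_{m\geq 1}\frac{1}{4m^2-1}=\tfrac12$. Your treatment of the equality case is in fact more complete than the paper's---the paper simply asserts that equality forces $a_{2m}=b_{2m}=0$, whereas you correctly observe that equality in Cauchy--Schwarz would otherwise pin down $r_m=\mu/(4m^2-1)$ and rule this out via $\rho_{\partial K}\in L^2$.
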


\begin{proof}

Because of (\ref{Fourierofp}), (\ref{Fourierofpprime}), (\ref{CauchyFormula}), (\ref{BlaschkeFormula}), the support functions $p_{\partial K}$ and $p_{\partial W_K}$ have the following Fourier series:

\begin{align}\label{FourierOfMWM}
p_{\partial K}(\theta) &=a_0+\sum_{n=1}^{\infty}(a_n\cos n\theta+b_n\sin n\theta),\\
\nonumber p_{\partial W_K}(\theta) &=a_0+\sum_{n=1, \text{n is odd}}^{\infty}(a_n\cos n\theta+b_n\sin n\theta).
\end{align}

Also one can get the Fourier series of $\Phi$ (see (\ref{IneqConvexProblem})):

\begin{align}\label{FourierOfPhi}
\Phi(K) &=L_{\partial K}^2-4\pi A_{\partial K}-8\pi\left|\widetilde{A}_{E_{\frac{1}{2}}(\M)}\right|\\
\nonumber	&=2\pi^2\sum_{n=2, \textit{n is even}}^{\infty}(n^2-1)(a_n^2+b_n^2).
\end{align}

One can check that $|a_n\cos n\theta+b_n\sin n\theta|\leqslant\sqrt{a_n^2+b_n^2}$ and then by (\ref{HausdorffDistance}) and H\"older's inequality:

\begin{align*}
d_{\infty}(K, W_K) &= \max_{\theta}\Big|p_{\partial K}(\theta)-p_{\partial W_K}(\theta)\Big|    \\
	&=\max_{\theta}\left|\sum_{n=2, \textit{n is even}}^{\infty}(a_n\cos n\theta+b_n\sin n\theta)\right|\\
	&\leqslant\max_{\theta}\left(\sum_{n=2, \textit{n is even}}^{\infty}|a_n\cos n\theta+b_n\sin n\theta|\right)\\
	&\leqslant\sum_{n=2, \textit{n is even}}\frac{1}{\sqrt{n^2-1}}\cdot \sqrt{n^2-1}\sqrt{a_n^2+b_n^2}\\
	&\leqslant\sqrt{\sum_{n=2, \textit{n is even}}\frac{1}{n^2-1}}\cdot \sqrt{\sum_{n=2, \textit{n is even}}(n^2-1)(a_n^2+b_n^2)}\\
	&=\sqrt{\frac{1}{2}}\cdot\sqrt{\frac{\Phi(K)}{2\pi^2}}.
\end{align*}
And the equality holds if and only if $a_{2m}=b_{2m}=0$ for all $m\in\mathbb{N}$, so $\partial K$ is a curve of constant width.
\end{proof}

\begin{thm}\label{ThmStabIneqL2}
Under the same assumptions of Theorem \ref{ThmStabIneqMax}, one gets
\begin{align}\label{StabIneqL2}
L_{\partial K}^2-4\pi A_{\partial K}-8\pi\left|\widetilde{A}_{\Eq_{\frac{1}{2}}(\partial K)}\right|\geqslant 6\pi d_2^2(K, W_K),
\end{align}
where equality holds if and only if $\partial K$ is a curve of constant width, or the Minkowski support function of $\partial K$ is in the form 
\begin{align*} p_{\partial K}(\theta)=a_0+a_2\cos 2\theta+b_2\sin 2\theta+\sum_{n=1, \text{n is odd}}^{\infty}(a_{n}\cos n\theta+b_{n}\sin n\theta).
\end{align*}
\end{thm}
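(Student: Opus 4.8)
The plan is to follow exactly the same template as the proof of Theorem \ref{ThmStabIneqMax}, replacing the Hausdorff distance by the $L_2$-distance, so that H\"older's inequality is replaced by a direct Parseval computation. The main point is that, unlike the sup-norm, the $L_2$-norm of $p_{\partial K}-p_{\partial W_K}$ can be computed \emph{exactly} in terms of the even Fourier coefficients, and then compared term-by-term with the Fourier expansion of $\Phi(K)$ from \eqref{FourierOfPhi}.

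\begin{proof}
By \eqref{FourierOfMWM} the difference of support functions is
\begin{align*}
p_{\partial K}(\theta)-p_{\partial W_K}(\theta)=\sum_{n=2,\ \textit{n is even}}^{\infty}\big(a_n\cos n\theta+b_n\sin n\theta\big).
\end{align*}
Hence, by the definition \eqref{LTwoDistance} and the Parseval equality,
\begin{align*}
d_2^2(K,W_K)=\int_0^{2\pi}\left|\sum_{n=2,\ \textit{n is even}}^{\infty}\big(a_n\cos n\theta+b_n\sin n\theta\big)\right|^2d\theta=\pi\sum_{n=2,\ \textit{n is even}}^{\infty}\big(a_n^2+b_n^2\big).
\end{align*}
On the other hand, by \eqref{FourierOfPhi},
\begin{align*}
\Phi(K)=2\pi^2\sum_{n=2,\ \textit{n is even}}^{\infty}(n^2-1)\big(a_n^2+b_n^2\big).
\end{align*}
Since for every even $n\geqslant 2$ one has $n^2-1\geqslant 3$, with equality precisely at $n=2$, it follows that
\begin{align*}
\Phi(K)=2\pi^2\sum_{n=2,\ \textit{n is even}}^{\infty}(n^2-1)\big(a_n^2+b_n^2\big)\geqslant 2\pi^2\cdot 3\sum_{n=2,\ \textit{n is even}}^{\infty}\big(a_n^2+b_n^2\big)=6\pi\, d_2^2(K,W_K),
\end{align*}
which is \eqref{StabIneqL2}. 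Equality holds if and only if $(n^2-1)(a_n^2+b_n^2)=3(a_n^2+b_n^2)$ for every even $n\geqslant 2$, i.e. $a_n=b_n=0$ for every even $n\geqslant 4$; this means either all even coefficients vanish, so $\partial K$ is a curve of constant width, or only $a_2,b_2$ may be nonzero among the even coefficients, which is exactly the stated form of $p_{\partial K}$.
\end{proof}

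I do not expect a serious obstacle here: the only things to be careful about are that the cross terms in the squared $L_2$-norm integrate to zero (orthogonality of distinct trigonometric modes) and that the two sums appearing in $\Phi(K)$ and in $d_2^2$ range over the same index set (even $n\geqslant 2$), so that the comparison $n^2-1\geqslant 3$ can be applied summand-by-summand. The characterization of the equality case is then immediate from tracking which summands are forced to vanish, and matches the two cases listed in the statement.
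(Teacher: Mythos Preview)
Your proof is correct and follows essentially the same approach as the paper: compute $d_2^2(K,W_K)$ via Parseval as $\pi\sum_{n\text{ even}}(a_n^2+b_n^2)$, compare term-by-term with $\Phi(K)=2\pi^2\sum_{n\text{ even}}(n^2-1)(a_n^2+b_n^2)$ using $n^2-1\geqslant 3$, and read off the equality case. Your treatment of the equality case is in fact slightly more explicit than the paper's.
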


\begin{proof}
Because of (\ref{FourierOfMWM}) and (\ref{FourierOfPhi})
\begin{align*}
d_2^2(K,W_K) &=\int_0^{2\pi}\Big|p_{\partial K}(\theta)-p_{\partial W_K}(\theta)\Big|^2d\theta\\
	&=\int_0^{2\pi}\left|\sum_{n=2, \textit{n is even}}(a_n\cos n\theta+b_n\sin\theta)\right|^2d\theta\\
	&=\pi\sum_{n=2, \textit{n is even}}(a_n^2+b_n^2)\\
	&\leqslant\frac{1}{6\pi}\cdot 2\pi^2\sum_{n=2, \textit{n is even}}(n^2-1)(a_n^2+b_n^2)\\
	&=\frac{1}{6\pi}\Phi(K).
\end{align*}
And the equality holds if and only if $a_{2m}=b_{2m}=0$ for all $m\in\mathbb{N}$, so $\partial K$ is a curve of constant width, or $\displaystyle p_{\partial K}(\theta)=a_0+a_2\cos 2\theta+b_2\sin 2\theta+\sum_{n=1, \text{n is odd}}^{\infty}(a_{n}\cos n\theta+b_{n}\sin n\theta)$.
\end{proof}

Let us consider the convex body $K$ for which (\ref{ExampleSupportFun}) is its Minkowski support function -- see Fig. \ref{FigExampleStabIneq}. We will check how close the right hand sides in the stability inequalities  (\ref{StabIneqMax}) and  (\ref{StabIneqL2}) are to be optimal.

\begin{align}\label{ExampleSupportFun}
p_{\partial K}(\theta)=10+2\cos 2\theta-\frac{1}{3}\sin 3\theta-\frac{1}{4}\cos 4\theta.
\end{align}

\begin{figure}[h]
\centering
\includegraphics[scale=0.4]{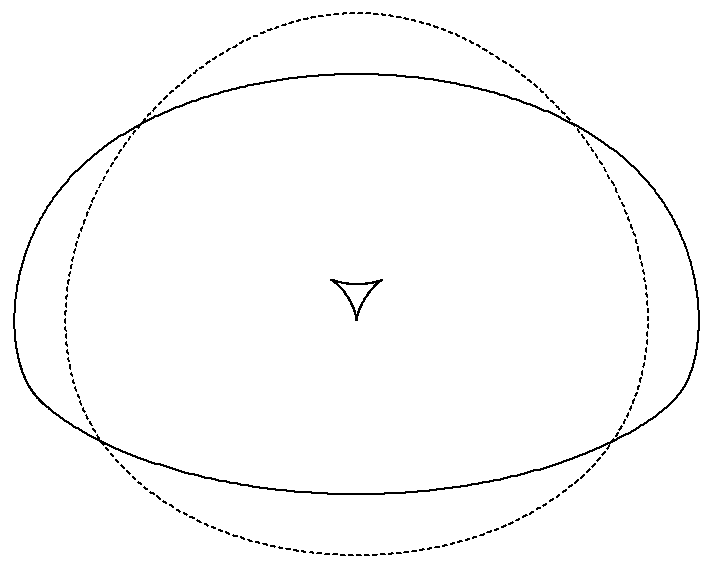}
\caption{A convex body $K$ for which $p_{\partial K}(\theta)=10+2\cos 2\theta-\frac{1}{3}\sin 3\theta-\frac{1}{4}\cos 4\theta$ is its Minkowski support function, the Wigner caustic type curve associated with $\partial K$ (the dashed one) and the Wigner caustic of $\partial K$.}
\label{FigExampleStabIneq}
\end{figure}
Then
\begin{align}
p_{\partial W_{K}}(\theta) & =10-\frac{1}{3}\sin 3\theta,\\
p_{\Eq_{\frac{1}{2}}(\partial K)}(\theta) &=-\frac{1}{3}\sin 3\theta.
\end{align}
One can check that
\begin{align}
L_{\partial K} &= 20\pi,\\
A_{\partial K} &=\frac{26809}{288}\pi, \\
\widetilde{A}_{\Eq_{\frac{1}{2}}(\partial K)} &=-\frac{2\pi}{9},\\
\label{lhs} L_{\partial K}^2-4\pi A_{\partial K}-8\pi\left|\widetilde{A}_{\Eq_{\frac{1}{2}}(\partial K)}\right| &=25.875\pi^2,
\end{align}
\begin{align}
\label{rhs1} 4\pi^2d_{\infty}^2(K, W_K) =4\pi^2\left(\max_{\theta}\left|2\cos 2\theta-\frac{1}{4}\cos 4\theta\right|\right)^2 &=20.25\pi^2,\\
\label{rhs2} 6\pi d_2^2(K,W_K) =6\pi\int_0^{2\pi}\left(2\cos 2\theta-\frac{1}{4}\cos 4\theta\right)^2d\theta &=24.375\pi^2.
\end{align}

Then by (\ref{lhs}), (\ref{rhs1}) the stability inequality (\ref{StabIneqMax}) in Theorem \ref{ThmStabIneqMax} is in the following form: 
\begin{align}
25.875\pi^2\geqslant 20.25\pi^2,
\end{align}
and by (\ref{lhs}), (\ref{rhs2}) the stability inequality (\ref{StabIneqL2}) in Theorem \ref{ThmStabIneqL2} is in the following form: 
\begin{align}
25.875\pi^2\geqslant 24.375\pi^2.
\end{align}

\section*{Acknowledgements}
The author would like to thank Professor Wojciech Domitrz for the helpful discussions.

\bibliographystyle{amsalpha}

\end{document}